\newtheorem{theorem}{Theorem}
\newtheorem{proposition}[theorem]{Proposition}
\newtheorem{lemma}[theorem]{Lemma}
\theoremstyle{definition}
\newtheorem{definition}[theorem]{Definition}
\newtheorem{question}[theorem]{Question}
\newtheorem{example}[theorem]{Example}
\theoremstyle{remark}
\newtheorem{remark}[theorem]{Remark}
\newtheorem{notation}[theorem]{Notation}
\numberwithin{equation}{section}
\numberwithin{theorem}{section}
\newcommand{\B}{\mathcal{B}}
\newcommand{\cc}{\mathfrak{c}}
\newcommand{\C}{\mathfrak{C}}
\newcommand{\CC}{\mathbb{C}}
\newcommand{\dens}{\text{dens}}
\newcommand{\eps}{\varepsilon}
\newcommand{\F}{\mathcal{F}}
\newcommand{\FF}{\mathbb{F}}
\newcommand{\M}{\mathcal{M}}
\newcommand{\MM}{\mathbb{M}}
\newcommand{\Mn}{\mathbb{M}_n}
\newcommand{\Mt}{\mathbb{M}_2}
\newcommand{\Mod}{\text{Mod}}
\newcommand{\N}{\mathcal{N}}
\newcommand{\NN}{\mathbb{N}}
\newcommand{\RR}{\mathbb{R}}
\newcommand{\R}{\mathcal{R}}
\newcommand{\s}{\mathcal{S}}
\newcommand{\T}{\text{II}_1}
\newcommand{\Th}{\text{Th}}
\newcommand{\tr}{\text{tr}}
\newcommand{\U}{\mathcal{U}}
\newcommand{\V}{\mathcal{V}}
\newcommand{\X}{\mathfrak{X}}
\newcommand{\ZZ}{\mathbb{Z}}
\begin{document}

\title{Model theory of operator algebras III: \\ Elementary equivalence and $\text{II}_1$ factors}

\author{Ilijas Farah}

\address{Department of Mathematics and Statistics\\
York University\\
4700 Keele Street\\
North York, Ontario\\ Canada, M3J
1P3\\
and Matematicki Institut, Kneza Mihaila 34, Belgrade, Serbia}

\email{ifarah@mathstat.yorku.ca}
\urladdr{http://www.math.yorku.ca/\symbol{126}ifarah}

\author{Bradd Hart}
\address{Department of Mathematics and Statistics\\
McMaster University\\ 1280 Main Street W.\\ Hamilton, Ontario\\
Canada L8S 4K1}

\email{hartb@mcmaster.ca}

\urladdr{http://www.math.mcmaster.ca/\symbol{126}bradd}
\thanks{The first two authors are partially supported by NSERC}

\author{David Sherman}

\address{Department of Mathematics\\
University of Virginia\\
P. O. Box 400137\\
Charlottesville, VA 22904-4137}
\email{dsherman@virginia.edu}

\urladdr{http://people.virginia.edu/\symbol{126}des5e}

\subjclass[2000]{46L10, 03C20, 03C98}
\keywords{continuous model theory, $\T$ factor, elementary equivalence, ultrapower, axiomatizability, pseudofiniteness}
\date{\today}

\begin{abstract}
We use continuous model theory to obtain several results concerning isomorphisms and embeddings between $\T$ factors and their ultrapowers.  Among other things, we show that for any $\T$ factor $\M$, there are continuum many nonisomorphic separable $\T$ factors that have an ultrapower isomorphic to an ultrapower of $\M$.  We also give a poor man's resolution of the Connes Embedding Problem: there exists a separable $\T$ factor such that all $\T$ factors embed into one of its ultrapowers.
\end{abstract}

\maketitle

\section{Introduction}

In this paper we study $\T$ factors using continuous model theory.  We develop ideas from \cite{FHS1,FHS2}, but the presentation here is intended to be self-contained and accessible to operator algebraists and other analysts who may not be familiar with logic.  Many of the results pertain to isomorphisms and embeddings between $\T$ factors and their ultrapowers.

Classical model theory is concerned with (first-order) logical properties, and some of its main techniques do not work well for analytic structures such as metric spaces.  \textit{Continuous model theory}, in which truth values are taken from bounded real intervals instead of the discrete set $\{T,F\}$, restores access to these logical techniques, in particular to fundamental theorems about ultraproducts.  Section \ref{S:back} of this paper offers some background on continuous model theory and the few basic tools used in this paper.  In the rest of this Introduction we outline our primary results, leaving full explanation of the terms to subsequent text.

Two objects that have the same logical properties are \textit{elementarily equivalent}.  By the continuous version of the Keisler-Shelah theorem, elementary equivalence is the same as the two objects having isomorphic ultrapowers.  This is a nontrivial relation, but much weaker than isomorphism itself: we show here that every $\T$ factor is elementarily equivalent to continuum many nonisomorphic separable $\T$ factors (Theorem \ref{T:main}).  We also go through several commonly-used operator algebraic properties and determine which are \textit{local} (=captured by first-order continuous model theory) or even \textit{axiomatizable}.

Another important logical relation is \textit{finite representability}, which induces a partial ordering based on the condition that one object embeds in an ultrapower of another.  This touches on issues around the Connes Embedding Problem, which asks whether every $\T$ factor embeds in an ultrapower of the hyperfinite $\T$ factor.  Although we do not offer any direct progress toward resolving this problem, our techniques easily imply that there are many ways of constructing a separable \textit{locally universal} $\T$ factor, i.e., one with the property that all $\T$ factors embed into one of its ultrapowers (Example \ref{T:ex}(2)).

A novel point of emphasis here is the identification of [structures modulo elementary equivalence] and [structures modulo mutual finite representability] with certain function spaces.  Some of the main logical notions are then expressible in terms of the topology and order in these function spaces.  We also introduce the class of \textit{pseudofinite} factors, those that are ``logic limits" of matrices.  It is known, but still surprising, that the hyperfinite $\T$ factor is not pseudofinite.  We give a short proof here (Theorem \ref{T:ps}) based on a neglected 1942 result of von Neumann.

The logical study of $\T$ factors is at a very early stage and may be unfamiliar to most of our targeted readership of analysts, so we present a fairly thorough exposition of the main ideas, including several results and questions from the literature (and answering a few of the latter).  On the other hand, we have consciously tried \textit{not} to include more logical machinery than necessary.  A sequel paper will collect results of a more general nature, relying on more substantial ideas from model theory.

\bigskip

\textbf{Acknowledgments.} We thank Junsheng Fang and Ward Henson for useful comments.

\section{Model theory, ultraproducts, and continuous model theory} \label{S:back}

\subsection{Model theory}
One starts with a first-order language for talking about a class of structures, called \textit{models} of the language.  (\textit{First-order} means that we allow quantification over a structure, but not the subsets of the structure -- that would be second-order.  We also do not distinguish typographically between a model and its underlying set, as is sometimes done.)  A basic example is the language of groups, with each group a model.  One expresses properties of a model by using the language to construct \textit{formulas}, which may or may not depend on variables.  We will write tuples of variables or elements with a bar, so that a typical formula is $\varphi(\bar{x})$.  The unquantified variables are called \textit{free}.  Now given a formula $\varphi(\bar{x})$ with $n$ free variables and a model $\M$, one can substitute an $n$-tuple $\bar{a} \in \M^n$ for $\bar{x}$ and see whether the resulting statement is true, writing $\M \vDash \varphi(\bar{a})$ if this is the case.  The symbol ``$\vDash$" denotes the relation of \textit{satisfaction}.  Formulas that do not have free variables are \textit{sentences}.  For instance, if we are working with the class of groups in the usual language, and we let $\varphi$ be the sentence $(\exists x) (x \neq e \wedge x^2 = e)$, then $\ZZ_6 \vDash \varphi$ but $\ZZ \not \vDash \varphi$.

The \textit{theory} of a model $\M$ is the set of sentences $\M$ satisfies, written $\Th(\M)$.  One also may consider the common theory of a class $K$ of models: $\Th(K) = \{\varphi \: | \: \M \vDash \varphi, \: \forall \M \in K\}$.  There is an obvious dual notion: for a set $\Sigma$ of sentences, $\Mod(\Sigma) = \{\M \: | \: \M \vDash \varphi, \: \forall \varphi \in \Sigma\}$.  A class of the form $\Mod(\Sigma)$ is said to be \textit{axiomatized} by $\Sigma$.

When two models cannot be distinguished by a first-order sentence, so that $\Th(\M) = \Th(\N)$, we say that $\M$ and $\N$ are \textit{elementarily equivalent} and write $\M \equiv \N$.  In general this is a weaker relation than isomorphism.  When $\M \subseteq \N$ and the truth value in $\M$ and the truth value in $\N$ are the same for any formula evaluated at a tuple from $\M$, we call $\M$ an \textit{elementary submodel} of $\N$.  This is stronger than saying $\M \subseteq \N$ and $\M \equiv \N$.

\subsection{Ultraproducts}
An \textit{ultrafilter} $\U$ on an index set $I$ can be defined in many ways.  We choose to view it as an element of the Stone-\v{C}ech compactification $\beta I$.  Thus ultrafilters on $I$ are either members of $I$ or limit points of $I$, the latter called \textit{free}.  The spaces $\beta I$ and $\beta I \setminus I$ are large (for $I$ infinite, their cardinality is $2^{2^{|I|}}$) and nonhomogeneous (their automorphism groups do not act transitively); many of their properties are sensitive to set-theoretic axioms beyond ZFC.

For a model $\M$ and an ultrafilter $\U$ on an index set $I$, the classical \textit{ultrapower} is
$$\M^\U = (\Pi_I \M)/\sim, \quad \text{where} \quad (x_j) \sim (y_j) \iff \U \in \overline{\{j \mid x_j = y_j\}}.$$
One should think of the equivalence relation as saying that the two sequences are ``equal at $\U$."  The ultrapower $\M^\U$ is a structure of the same kind as $\M$, and there is a canonical inclusion $\M \hookrightarrow \M^\U$ as (equivalence classes of) constant sequences.  This inclusion can be surjective, but typically not, and often there is a jump in cardinality.  For instance, when $\M$ is countably infinite and $\U$ is a free ultrafilter on $\NN$, then $|\M^\U| = \cc$, the cardinality of the continuum.  If one replaces $\Pi_I \M$ with $\displaystyle \Pi_{j \in I} \M_j$, the resulting object is called an \textit{ultraproduct} of the $\M_j$ and is written $\Pi_\U \M_j$.  Finally, any object $\N$ having an ultrapower isomorphic to $\M$ is called an \textit{ultraroot} of $\M$.

Ultrapowers were introduced by \L o\'{s} \cite{L} in 1955 and soon figured in several beautiful results in classical model theory.

\begin{theorem} \label{T:up}
Let $\M$ and $\N$ be models of the same language.
\begin{enumerate}
\item (\L o\'{s}'s theorem \cite{L}) For any ultrafilter $\U$, $\M$ is an elementary submodel of $\M^\U$, so in particular $\M^\U \equiv \M$.
\item (Keisler-Shelah theorem \cite{K,Sh1}) We have $\M \equiv \N$ if and only if there are ultrafilters $\U, \V$ such that $\M^\U \simeq \N^\V$.
\item (\cite{FMS}) A class of structures is axiomatizable if and only if it is closed under isomorphism, ultraproduct, and ultraroot.
\end{enumerate}
\end{theorem}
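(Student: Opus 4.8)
All three parts are classical, so I only sketch the arguments. For (1), the plan is to prove by induction on the complexity of $\varphi(\bar x)$ that for any tuple of $\M^\U$ represented by sequences $(a^1_j), \dots, (a^n_j)$, one has $\M^\U \vDash \varphi([(a^1_j)], \dots, [(a^n_j)])$ if and only if $\{j : \M \vDash \varphi(a^1_j, \dots, a^n_j)\} \in \U$. Atomic formulas I would handle straight from the definition of the ultrapower, including the way equality is quotiented out; the propositional connectives then commute with ``membership in $\U$'' precisely because $\U$ is an ultrafilter, i.e., is closed under finite intersections and, for each $S \subseteq I$, contains $S$ or $I \setminus S$. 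The quantifier clause is where the axiom of choice is used: to witness $(\exists y)\,\psi$ in $\M^\U$ I would, on the set of coordinates $j$ where $\M \vDash (\exists y)\,\psi(\bar a_j, y)$, pick some $b_j \in \M$ with $\M \vDash \psi(\bar a_j, b_j)$, and then $[(b_j)]$ is a witness in $\M^\U$; conversely a witness in $\M^\U$ restricts to witnesses on a $\U$-large set of coordinates. Granting this, the canonical map $\M \hookrightarrow \M^\U$ is elementary, since for a constant tuple the index set $\{j : \M \vDash \varphi(\bar a)\}$ is all of $I$ or $\emptyset$; taking $\varphi$ a sentence gives $\M^\U \equiv \M$.

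For (2), one implication is immediate from (1): if $\M^\U \simeq \N^\V$ then $\M \equiv \M^\U \equiv \N^\V \equiv \N$, since isomorphic structures are elementarily equivalent. For the converse (the substantial half) I would aim to exhibit a single index set $I$ and ultrafilter $\U$ on it for which $\M^\U$ and $\N^\U$ are both saturated of the same cardinality; then $\M \equiv \N$ yields $\M^\U \equiv \N^\U$ by (1), and a back-and-forth argument shows that elementarily equivalent saturated models of equal cardinality are isomorphic. The engine is the existence of ``good'' ultrafilters: for every infinite $\kappa$ there is a countably incomplete $\kappa^+$-good ultrafilter on $\kappa$, and ultraproducts over such an ultrafilter are $\kappa^+$-saturated. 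Arranging that $\M^\U$ and $\N^\U$ come out saturated in their common cardinality is the crux, and I expect it to be the main obstacle: Keisler's \cite{K} original argument achieves this under GCH fairly softly, whereas Shelah's \cite{Sh1} ZFC version is genuinely intricate. In the continuous setting of \cite{FMS} the same plan works, with ``saturated'' and ``good ultrafilter'' read in their metric senses.

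For (3), the forward direction is routine. If $K = \Mod(\Sigma)$ then closure under isomorphism is immediate; closure under ultraproducts is (1) applied to each $\varphi \in \Sigma$; and closure under ultraroots holds because if $\M^\U \in K$ then $\M \equiv \M^\U \vDash \Sigma$ by (1), so $\M \in K$. For the converse, suppose $K$ is closed under isomorphism, ultraproduct and ultraroot, set $\Sigma = \Th(K)$ (so $K \subseteq \Mod(\Sigma)$ trivially), and fix any $\M \vDash \Sigma$. First I would observe that every finite $\Sigma_0 \subseteq \Th(\M)$ is satisfiable in some member of $K$: otherwise $\neg \bigl( \bigwedge \Sigma_0 \bigr)$ would lie in $\Th(K) = \Sigma$, contradicting $\M \vDash \Sigma$. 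Indexing over the finite subsets of $\Th(\M)$ and forming an ultraproduct of such witnesses along an ultrafilter concentrated on the relevant ``large'' sets then yields $\N \in K$ with $\N \equiv \M$. Finally, (2) provides ultrafilters with $\M^\U \simeq \N^\V$; then $\N \in K$ gives $\N^\V \in K$ by closure under ultraproducts, hence $\M^\U \in K$ by closure under isomorphism, hence $\M \in K$ by closure under ultraroots. Thus $\Mod(\Sigma) \subseteq K$, so $K = \Mod(\Sigma)$. The only nontrivial ingredient is (2), so the real work again sits in the ZFC Keisler-Shelah theorem.
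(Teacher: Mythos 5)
The paper does not prove Theorem \ref{T:up} at all --- it is stated as background and attributed to \cite{L}, \cite{K,Sh1}, and \cite{FMS} --- so there is no ``paper proof'' to compare against; your sketch is the standard textbook argument for all three parts and is sound in outline. Two small points. First, your description of the hard half of (2) accurately reflects Keisler's GCH proof (good ultrafilters $\Rightarrow$ saturated ultrapowers of equal cardinality $\Rightarrow$ back-and-forth), but Shelah's ZFC proof does not actually run through saturated models --- without GCH the $\kappa^+$-saturated ultrapower can have cardinality exceeding $\kappa^+$ and so need not be saturated; Shelah instead builds the ultrafilter and the isomorphism together. You flag the difficulty, so this is a presentational nuance rather than a gap, but as written the sentence suggests Shelah's argument follows the same plan. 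Second, \cite{FMS} is Frayne--Morel--Scott on reduced products in classical logic, the reference for part (3); it is not a source for ``the continuous setting,'' which the paper handles separately via \cite{BBHU}. The rest --- the induction for \L o\'{s} with choice at the existential step, the finite-satisfiability-in-$K$ argument indexed over finite subsets of $\Th(\M)$, and the final appeal to (2) plus closure under ultrapower, isomorphism, and ultraroot --- is exactly the classical route.
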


For a bounded metric space $(\M, d)$, one defines an ultrapower as follows:
\begin{equation} \label{E:up}
\M^\U = \Pi_I \M/\sim, \quad \text{where} \quad (x_j) \sim (y_j) \iff d(x_j, y_j) \overset{j \to \U}{\to} 0.
\end{equation}

If one applies \eqref{E:up} to the unit ball of a Banach space $\X$, one gets the unit ball of the Banach space
$$\X^\U = \ell^\infty_I(\X)/\{(x_j) \mid \lim_{j \to \U} \|x_j\| = 0\}.$$
This definition of $\X^\U$ seems to have first appeared in published work in 1972 \cite{DK}, but it is closely related to the slightly older concept of a nonstandard hull \cite{Lu}.  For $C^*$-algebras and other norm-based structures, the construction of the ultrapower is the same as for Banach spaces.

In this paper by a \textit{trace} $\tau$ on a von Neumann algebra $\M$ we will mean a weak*-continuous faithful tracial state.  A \textit{tracial von Neumann algebra} $(\M, \tau)$ will be a von Neumann algebra equipped with a given trace.  If $\M$ is a finite factor, then it has a unique trace that need not be specified.  The separable hyperfinite $\T$ factor will consistently be denoted $\R$.

On the norm unit ball of a tracial von Neumann algebra $(\M, \tau)$, the $L^2$-norm $\|x\|_2 = \sqrt{\tau(x^*x)}$ is a complete metric that implements the strong operator topology.  If one applies \eqref{E:up} to this metric space, one obtains the norm unit ball of the tracial von Neumann algebra
$$\M^\U = \ell^\infty_I(\M)/\{(x_j) \mid \lim_{j \to \U} \|x_j\|_2 = 0\}, \qquad \tau_{\M^\U}((x_j)) = \lim_{j \to \U} \tau(x_j).$$
These ultrapowers gained notice from their use in an important 1970 article of McDuff \cite{M}, but the underlying mathematics had been done in Sakai's 1962 notes \cite{Sa}, in fact mostly done in a 1954 paper of Wright \cite{W} about $AW^*$-algebras (predating \L o\'{s}!).

For analytic ultrapowers we have the same basic properties as in the classical case: the ultrapower is the same kind of structure as the original object, which it contains (usually properly) as constant sequences.  There is, however, a key difference with regard to the logical structure.  Loosely speaking, properties that only hold approximately in an object can hold precisely in its ultrapower.  The conversion from approximate to precise stems from saturation (defined in Section \ref{S:cmt}; Proposition \ref{T:gammarc}(3) is a specific instance of this conversion) and is perhaps the main reason that analytic ultrapowers are useful.  See \cite{S2009} for more on this theme.

But the strengthening from approximate to precise also entails that the model theoretic results of Theorem \ref{T:up} do not hold for analytic ultrapowers.  The following counterexample, which seems to be the simplest one to explain, is based on a similar discussion in \cite[p.28]{HI}.

\begin{example} \label{E:square}
Let $\ell^p_2$ denote the real 2-dimensional $L^p$ space.  Working in the language of real Banach spaces, consider the sentence
$$\varphi: (\exists x,y) \left[\|x\|=1 \wedge \|y\|=1 \wedge \left\|\frac{x+y}{2}\right\| = 1 \wedge \left\|\frac{x-y}{2}\right\| = 1\right].$$
The condition $\X \vDash \varphi$ says that $\X$ has a square on its unit sphere, or equivalently, $\ell^1_2 \overset{\sim}{\hookrightarrow} \X$.

If we take $\X = \oplus_{p=2}^\infty \ell^p_2$ ($\ell^2$-direct sum), then $\ell^1_2$ does not embed isometrically into $\X$, and $\X \not \vDash \varphi$.  Now let $\{x_j, y_j\}$ be the standard basis of the $\ell^j_2$ summand, so as $j$ increases these elements get closer and closer to satisfying the last two equations in $\varphi$.  Then for any $\U \in \beta \NN \setminus \NN$, the elements $(x_j), (y_j) \in \X^\U$ do satisfy the equations, and $\X^\U \vDash \varphi$.  This makes $\Th(\X) \neq \Th(\X^\U)$, which is enough to falsify all the statements in Theorem \ref{T:up}.
\end{example}

\subsection{Continuous model theory} \label{S:cmt}

One wants to rescue Theorem \ref{T:up} with a model theoretic framework that is appropriate for functional analysis, and over the years several have been proposed.
In our current research we employ (extensions of) the so-called \textit{model theory of metric structures}, of which basic expositions appeared only in 2008 \cite{BBHU} and 2010 \cite{BU}, although some of the main ideas go back to \cite{H} in 1976 and the more recent works \cite{HI,B}.  This is a ``continuous model theory," meaning that the truth value of a formula is taken from a bounded real interval (possibly depending on the formula) instead of the discrete set $\{T, F\}$.  The continuity of the range of the truth variable is of course not a new idea in logic \cite{CK} or even in model theory \cite{Kr}, but the syntax in this approach is particularly clear and flexible.

In the standard version, models must be equipped with a bounded metric $d$.  Formulas are built out of four ingredients.
\begin{itemize}
\item \textit{Terms} are meaningful expressions in the language, such as $x-y$ for Banach spaces.  This is no different from usual first-order logic.
\item The \textit{metric} is used to produce real numbers.  The truth of the classical formula $x=y$ is expressed by $d(x,y)$ having the value zero, so we might think of zero as generally corresponding to truth.
\item \textit{Continuous functions $\RR^n \to \RR$} can be applied.  These are the connectives.
\item We may take \textit{infima} and \textit{suprema} as variables run over the structure.  If a formula $\varphi(x)$ takes values in the range $[0,1]$, then the classical sentence $(\forall x) \varphi(x)$ is the same as the continuous sentence $\sup_x \varphi(x)$ having the value zero, although $(\exists x) \varphi(x)$ is stronger than $\inf_x \varphi(x)$ being zero.  Still we think of $\sup$ as $\forall$ and $\inf$ as $\exists$.
\end{itemize}
The value of a formula $\varphi(\bar{x})$ for a specific $\bar{a} \in \M^n$ is written $\varphi^\M(\bar{a})$.  We then define the \textit{theory} of a model $\M$ to be $\Th(\M) = \{\varphi \mid \varphi^\M = 0\}$.  The requirement ``$\varphi^\M = 0$" is called a \textit{condition} on $\M$, so that two models are elementarily equivalent when they satisfy the same conditions, and axiomatizable classes are those characterized by a set of conditions.  One defines ultrapowers as in \eqref{E:up}, with values of formulas governed by
\begin{equation} \label{E:limit}
\varphi^{\Pi_\U \M_j} ((\bar{a}_j)_j) = \lim_{j \to \U} \varphi^{\M_j} (\bar{a}_j).
\end{equation}

All concepts introduced so far have straightforward analogues in continuous model theory.  In fact \textit{all the statements in Theorem \ref{T:up} remain true} in continuous model theory.  (A comprehensive reference is \cite[Section 5]{BBHU}, although older papers such as \cite{St} effectively contain the same results.)  Note that a classical model can always be turned into a continuous model by equipping it with the discrete $\{0,1\}$-metric, so continuous model theory is a generalization of classical model theory.

For the sequel we will need two more model theoretic tools, this time only stating the continuous versions.  Again \cite{BBHU} is a good reference, although as stated Proposition \ref{T:sat}(2) requires techniques elaborated in the proof of \cite[Theorem 10.8]{HI}.  Recall that the \textit{density character} of a set in a topological space, denoted here $\dens(\cdot)$, is the minimal cardinality of a dense set.

\begin{theorem}[Downward L\"{o}wenheim-Skolem] \label{T:dls}
Let $X$ be a subset of a model $\M$ in a separable language.  Then there is a elementary submodel $\M_0 \subseteq \M$ such that $X \subseteq \M_0$ and $\dens(\M_0) \leq \dens(X) + \aleph_0$.  In particular, by taking $X = \varnothing$, it follows that every elementary equivalence class contains a separable model.
\end{theorem}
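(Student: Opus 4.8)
The plan is to run the classical iterated Tarski--Vaught construction, adapted to the two features that distinguish the continuous setting: infima over a model need not be attained, so one works with approximate witnesses, and substructures are required to be metrically closed, so one must check that passing to closures costs nothing.

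The first ingredient is the continuous Tarski--Vaught test: if $\M_0 \subseteq \M$ is a substructure (a nonempty $d$-closed subset closed under the interpretations of the function symbols, including constants), then the inclusion is an elementary embedding provided that for every formula $\varphi(\bar x, y)$ and every tuple $\bar a$ from $\M_0$ we have $\inf\{\varphi^\M(\bar a,b) : b \in \M_0\} = \inf\{\varphi^\M(\bar a,b) : b \in \M\}$. I would prove this by induction on the complexity of $\varphi$ just as classically; the only new points are that the $\inf$ quantifier is handled by a sequence of approximate witnesses rather than a single exact one (and $\sup$ is subsumed, being expressible from $\inf$ and a connective), and that the uniform continuity of the connectives and of the interpretations of the function symbols is what makes the induction step go through. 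See \cite[Section~7]{BBHU}.

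Next, set $\kappa = \dens(X) + \aleph_0$ and fix a dense $A_0 \subseteq X$ with $|A_0| \leq \kappa$ (if $X = \varnothing$, let $A_0$ be a single point of $\M$, which is nonempty). Since the language is separable, the set of formulas in any fixed finite tuple of variables is separable in the uniform metric $\sup_{\M,\bar a}|\varphi^\M(\bar a) - \psi^\M(\bar a)|$, so I may fix a \emph{countable} family $\Phi$ of formulas dense among all formulas. I then build an increasing chain $A_0 \subseteq A_1 \subseteq \cdots$ of subsets of $\M$ each of size $\leq \kappa$: given $A_n$, adjoin to it the values of the function symbols on tuples from $A_n$ and, for every $\varphi(\bar x,y) \in \Phi$, every finite tuple $\bar a$ from $A_n$, and every $k \in \NN$, one element $b \in \M$ with $\varphi^\M(\bar a,b) < \inf\{\varphi^\M(\bar a,b') : b' \in \M\} + 1/k$. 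There are at most $\aleph_0 \cdot \kappa \cdot \aleph_0 = \kappa$ such requests, so $|A_{n+1}| \leq \kappa$. Let $A = \bigcup_n A_n$ and let $\M_0$ be the metric closure of $A$ with the induced structure. Then $\dens(\M_0) \leq |A| \leq \kappa$; $\M_0$ is a substructure because any finite tuple from $A$ lies in some $A_n$, so its image under a function symbol is in $A_{n+1} \subseteq \M_0$, and this extends to all of $\M_0 = \overline A$ by uniform continuity of the function symbols together with closedness of $\M_0$; and $X \subseteq \overline{A_0} \subseteq \M_0$.

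Finally I would verify the Tarski--Vaught condition for $\M_0 \subseteq \M$. Given $\varphi(\bar x,y)$ and $\bar a \in \M_0$, pick $\varphi' \in \Phi$ uniformly close to $\varphi$ and a tuple $\bar a'$ lying in some $A_n$ and close to $\bar a$ in $d$; the witnesses placed into $A_{n+1}$ give $\inf\{\varphi'^\M(\bar a',b): b \in \M_0\} = \inf\{\varphi'^\M(\bar a',b): b \in \M\}$, and uniform continuity of $\varphi$ in $\bar x$ together with the uniform closeness of $\varphi'$ to $\varphi$ transfers the equality back to $\varphi$ and $\bar a$ (up to an error that can be made arbitrarily small, hence zero). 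By the Tarski--Vaught test, $\M_0$ is an elementary submodel of $\M$, which proves the theorem; the final assertion is the case $X = \varnothing$. I expect no serious obstacle here: the one place requiring care is the cardinality bookkeeping in the continuous Tarski--Vaught argument, i.e.\ ensuring that a countable dense family of formulas genuinely suffices so that $\dens(\M_0)$ is not inflated, and that taking the metric closure at the end preserves both elementarity and the density bound.
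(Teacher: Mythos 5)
Your proposal is correct and is essentially the standard argument: the paper does not prove Theorem \ref{T:dls} itself but cites \cite{BBHU}, and your iterated approximate-Tarski--Vaught construction with a countable uniformly dense family of formulas, followed by metric closure, is precisely the proof given there (see \cite[Propositions 4.5 and 7.3]{BBHU}). The only routine adaptation not mentioned is that in the paper's framework with domains of quantification $D_n$ one runs the same closure argument sort-by-sort, which affects nothing in the cardinality bookkeeping.
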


(The precise meaning of ``separable language" is somewhat technical (see \cite[Section 4.2]{FHS2}).  All languages considered in this paper are separable.)

Finally, for a cardinal $\kappa$, we say that a model $\M$ is \textit{$\kappa$-saturated} if the following holds.  Let $A$ be a subset of $\M$ with cardinality $< \kappa$.  Add to the language a constant symbol for each element of $A$; $\M$ is still a model in the augmented language.  Let $\{\varphi_i (\bar{x})\}$ be a set of formulas in the augmented language that is \textit{finitely approximately satisfied}, i.e., for any finite subset $\{\varphi_{i_k}\}$ and $\eps > 0$, there is a tuple $\bar{a} \in \M^n$ such that $\max_k |\varphi_{i_k}(\bar{a})| < \eps$.   Then there is already a tuple in $\M$ that satisfies all the formulas $\varphi_i(\bar{x})$.  Roughly speaking, $\kappa$-saturation says that elements that could exist in $\M$, subject to fewer than $\kappa$ parameters, actually do.

\begin{proposition} \label{T:sat}
Let $\M$ be a model.
\begin{enumerate}
\item For $\U \in \beta \NN \setminus \NN$, $\M^\U$ is $\aleph_1$-saturated.
\item For any cardinal $\kappa$, $\M$ has an ultrapower that is $\kappa$-saturated.
\end{enumerate}
\end{proposition}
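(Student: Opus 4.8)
The plan is to handle the two parts by quite different means: part (1) by an explicit diagonal argument, and part (2) by reduction to the standard theory of saturated ultrapowers, which is where the real work sits.

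For (1), I would fix $\U \in \beta\NN\setminus\NN$, a countable set $A = \{a^{(i)} : i\in\NN\} \subseteq \M^\U$, and a set $\Phi = \{\varphi_n(\bar x) : n\in\NN\}$ of formulas over $A$ (the language with a new constant for each element of $A$), with $\bar x$ of some fixed finite length, that is finitely approximately satisfied in $\M^\U$; reducing the general, possibly uncountable, $\Phi$ to this case is a routine consequence of the separability of the language. Fixing representing sequences $(a^{(i)}_j)_j$ in $\M$, formula \eqref{E:limit} (\L o\'{s}'s theorem) gives $\varphi_n^{\M^\U}(\bar b) = \lim_{j\to\U}\varphi_n^\M(\bar b_j)$ for every class $\bar b=(\bar b_j)_j$, the constants interpreted coordinatewise. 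Writing $\psi_n := \max_{k\le n}|\varphi_k|$, finite approximate satisfiability applied to $\{\varphi_1,\dots,\varphi_n\}$ with $\eps = 1/n$ produces a tuple $\bar b^{(n)}$ with $\psi_n^{\M^\U}(\bar b^{(n)}) < 1/n$, so that
\[
S_n := \{\, j \in \NN : \psi_n^\M(\bar b^{(n)}_j) < 1/n \,\} \in \U.
\]
The diagonalization is then: for $j\in\NN$ let $m(j) := \max\{\, n\le j : j\in S_1\cap\cdots\cap S_n\,\}$ when $j\in S_1$, and $m(j):=0$ otherwise, and set $\bar c_j := \bar b^{(m(j))}_j$ (arbitrary when $m(j)=0$). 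To verify that the class $\bar c$ realizes $\Phi$, fix $n$ and $\eps>0$, pick $N\ge n$ with $1/N<\eps$, and put $E := (S_1\cap\cdots\cap S_N)\cap\{\, j : j\ge N\,\}$; since $\{\,j:j\ge N\,\}$ is cofinite it lies in $\U$ exactly because $\U$ is free, so $E\in\U$, and for $j\in E$ we have $m(j)\ge N$, hence $|\varphi_n^\M(\bar c_j)| \le \psi_{m(j)}^\M(\bar b^{(m(j))}_j) < 1/m(j) \le 1/N < \eps$. Thus $\{\,j : |\varphi_n^\M(\bar c_j)| < \eps\,\} \supseteq E \in\U$, so $|\varphi_n^{\M^\U}(\bar c)|\le\eps$ by \eqref{E:limit}, and letting $\eps\to0$ gives $\varphi_n^{\M^\U}(\bar c)=0$. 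The only ingredients here are \L o\'{s}'s theorem and the fact that a free ultrafilter on $\NN$ contains all cofinite sets; I expect no obstacle.

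For (2), I would take $\lambda := \max(\kappa, \dens(\M)+\aleph_0)$ and use $\M^\U$ for $\U$ a countably incomplete good ultrafilter on $\lambda$: such ultrafilters exist in ZFC, and the standard theorem on good ultrafilters says that $\M^\U$ is then $\lambda^+$-saturated, in particular $\kappa$-saturated. A more hands-on alternative is to build a transfinite elementary chain $\M = \M_0 \preceq \M_1 \preceq\cdots$ of length a regular cardinal $\ge\kappa$, with unions taken at limit stages and with $\M_{\alpha+1}$ obtained from $\M_\alpha$ by iteratively passing to ultrapowers so as to realize every finitely satisfiable type over $\M_\alpha$ (each individual type over a model is realized in some ultrapower of that model, via the index set of finite approximate conditions); the union is then $\kappa$-saturated because any type over fewer than $\kappa$ parameters has its parameters appearing at some bounded stage. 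On either route, the nontrivial input lies beyond the elementary toolkit of this section — the existence of good ultrafilters, or, on the second route, the limit-ultrapower machinery needed to see that the iterate is once again an ultrapower of $\M$ — together with the continuous-logic version of that input. I expect this to be the main obstacle, and it is exactly what the techniques in the proof of \cite[Theorem 10.8]{HI} provide, so I would cite that rather than reconstruct it.
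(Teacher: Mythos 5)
Your proposal is correct and takes essentially the same route as the paper, which offers no proof of its own but points to \cite{BBHU} for (1) and to the good-ultrafilter techniques in the proof of \cite[Theorem 10.8]{HI} for (2): your diagonalization in (1) is exactly the standard argument behind the cited result (and your reduction to countably many formulas via separability of the language is the right routine step), while your (2) invokes precisely the cited machinery, correctly noting that the countably incomplete good ultrafilter is where the real content lies and that the elementary-chain alternative must still produce an actual ultrapower of $\M$.
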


In continuous model theory there are a language and a set of axioms whose models are exactly the unit balls of real Banach spaces \cite[Example 2.1(4)]{BBHU}.  (Recall that we want the metric to be bounded.)  Addition is not everywhere-defined, so one instead works with the averaging operation $(x,y) \mapsto \frac{x+y}{2}$.  The content of $\varphi$ from Example \ref{E:square} cannot be expressed in this language, but a substitute could be
$$\psi: \inf_{x,y} \left[ | \|x\|-1| + | \|y\|-1| + \left| \left\|\frac{x+y}{2}\right\|-1\right| + \left| \left\|\frac{x-y}{2}\right\|-1\right| \right].$$
The condition $\psi^\X = 0$ means that $\ell^1_2 \overset{1+\varepsilon}{\hookrightarrow} \X$, or equivalently, that $\X$ is not \textit{uniformly nonsquare} in the sense of James \cite{J}.  The Banach spaces that fail to be uniformly nonsquare thus form an axiomatizable class.

In \cite{FHS2} we gave explicit axiomatizations for $C^*$-algebras, tracial von Neumann algebras, and $\T$ factors.  The latter two present an extra difficulty because both the $L^2$ and norm metrics play roles.  Our solution was to develop a version of continuous model theory in which the underlying metric space of a model is a union of bounded sets $D_n$, intended as domains of quantification.  In the case of tracial von Neumann algebras one can take the domains to be balls of radius $n$ in the uniform norm, but equipped with the $L^2$-metric.  (In this framework one also regains access to operations like addition, understanding it as a collection of maps from $D_m \times D_n$ to $D_{m+n}$.)  We should note that different axiomatizations are mentioned in the two-page internet report \cite{BHJR}, which seems to represent the first effort to approach $\T$ factors from a model theoretic point of view.




\section{Axiomatizability and locality}

\subsection{A function representation for elementary equivalence classes of models}  Take any class of models that has been axiomatized in a language, and let $S$ be the set of sentences.  For a model $\M$, let $f_\M$ be the real-valued function on $S$ defined by $f_\M(\varphi) = \varphi^\M$.  Now $\Th(\M)$ is just the zero set of $f_\M$, but it is easy to see that it determines $f_\M$: the value of $f_\M(\varphi)$ is the real $c$ such that $\varphi - c$ belongs to $\Th(\M)$.  Thus $\M \equiv \N \iff f_\M = f_\N$.

\begin{notation}
We let $\C$ be the set of functions on $S$ of the form $f_\M$, subscripted if we want to identify the class of models, e.g. $\C_{\T}$.  As just mentioned, $\C$ can be identified with the set of elementary equivalence classes of models.  We endow $\C$ with the topology of pointwise convergence as functions on $S$.
\end{notation}

\begin{proposition}
$\C$ is compact, and axiomatizable subclasses are in 1-1 correspondence with the closed subsets of $\C$.
\end{proposition}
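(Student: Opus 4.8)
The plan is to realize $\C$ as a subspace of a product of compact intervals and reduce everything to point-set topology together with a single application of the compactness theorem of continuous logic (itself a consequence of \L o\'{s}'s theorem).

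\emph{Compactness of $\C$.} Every sentence $\varphi\in S$ has a compact interval $I_\varphi\subseteq\RR$ of possible values, so $\C$ sits inside the compact product $P=\prod_{\varphi\in S}I_\varphi$ (Tychonoff), and the topology of pointwise convergence on $\C$ is exactly the subspace topology from $P$. It thus suffices to show that $\C$ is closed in $P$. Let $f\in P$ lie in the closure of $\C$. For every finite $F_0\subseteq S$ and $\eps>0$ there is, by hypothesis, a model $\M$ of the ambient class with $|f_\M(\varphi)-f(\varphi)|<\eps$ for all $\varphi\in F_0$; in other words the set of conditions $\{\,|\varphi-f(\varphi)|=0 : \varphi\in S\,\}$ is finitely approximately satisfiable by models of the class. (The axioms of the class are among these conditions: they are sentences vanishing on every model of the class, hence $f$, being a pointwise limit of such $f_\M$, vanishes on them as well.) Taking an ultraproduct of the witnessing models along a suitable ultrafilter on the directed set of pairs $(F_0,\eps)$ and invoking \L o\'{s}'s theorem (Theorem \ref{T:up}(1)) --- which by \eqref{E:limit} converts the approximate satisfaction into exact satisfaction --- produces a model $\M$ of the class with $\varphi^\M=f(\varphi)$ for all $\varphi\in S$, i.e.\ $f=f_\M\in\C$. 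Hence $\C$ is closed in $P$, and therefore compact.

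\emph{The correspondence.} For an axiomatizable subclass $K$, choose a set $\Sigma$ of conditions $\psi=0$ (with $\psi$ a sentence) axiomatizing $K$ over the ambient class. Each evaluation map $f\mapsto f(\psi)$ is continuous on $\C$, so $\{f\in\C : f(\psi)=0\}$ is closed, and $\C_K=\bigcap_{(\psi=0)\in\Sigma}\{f : f(\psi)=0\}$ because $f_\M\in\C_K\iff\M\vDash\Sigma\iff f_\M(\psi)=0$ for all such $\psi$; thus $\C_K$ is closed. Conversely, let $F\subseteq\C$ be closed. Then $F$ is an intersection of basic closed sets $B=\{f\in\C : f(\varphi_i)\in C_i,\ 1\le i\le m\}$ with each $C_i$ closed in $I_{\varphi_i}$. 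Putting $d_i(t)=\operatorname{dist}(t,C_i)$ --- continuous and vanishing exactly on $C_i$ --- and forming the sentence $\psi_B=\sum_{i=1}^m d_i(\varphi_i)$ (a continuous connective applied to sentences, rescaled into an admissible range as usual), we get $f_\M(\psi_B)=\sum_i d_i(\varphi_i^\M)$, which is $0$ exactly when $f_\M\in B$. Hence $F=\C_K$ for $K=\Mod(\{\psi_B=0\}\cup\Sigma_0)$, where $\Sigma_0$ axiomatizes the ambient class; in particular $K$ is axiomatizable, and $\C_K=F$ since every member of $F\subseteq\C$ is some $f_\M$, which then lies in $K$. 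Finally $K\mapsto\C_K$ is injective: an axiomatizable class is closed under elementary equivalence, so $\M\in K\iff f_\M\in\C_K$, and $\C_K$ recovers $K$.

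\emph{Main obstacle.} The only step that is not routine topology is the closedness of $\C$ in $P$: this is where the compactness theorem (equivalently, \L o\'{s}'s theorem applied to an ultraproduct of approximating models) does the real work, and where one must verify that the limiting function is actually realized by a genuine model of the class. Elsewhere the only point needing a little care is the bookkeeping of value ranges for the connectives $d_i$ and the finite sums $\psi_B$, but rescaling handles this without affecting the zero sets.
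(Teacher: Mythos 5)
Your compactness argument and your proof that axiomatizable subclasses give closed subsets are both correct. The compactness step (closedness of $\C$ in the product $P$ via an ultraproduct of approximating models indexed by pairs $(F_0,\eps)$) is the standard compactness theorem of continuous logic and does the same work the paper leaves implicit; and your observation that $\C_K$ is an intersection of zero sets of the continuous coordinate evaluations $f\mapsto f(\psi)$ is actually more direct than the paper's route, which re-runs the ultraproduct computation and invokes closure of $K$ under ultraproducts.

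There is, however, a genuine gap in the converse direction. You claim that every closed $F\subseteq\C$ is an intersection of sets of the form $B=\{f: f(\varphi_i)\in C_i,\ 1\le i\le m\}$. This is false for the product topology: such $B$ are finite intersections of subbasic closed sets (``rectangles''), and an intersection of rectangles is again a rectangle in each finite block of coordinates, whereas a general closed set in a product is an intersection of \emph{finite unions} of subbasic closed sets. (Concretely, a diagonal $\{f: f(\varphi_1)=f(\varphi_2)\}$ is closed but is not an intersection of rectangles.) Consequently your sentences $\psi_B=\sum_i d_i(\varphi_i)$, whose zero sets are exactly the rectangles, cannot in general cut out $F$. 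The fix is small: encode a finite union $\bigcup_{i=1}^m\{f: f(\varphi_i)\in C_i\}$ by the sentence $\min_{1\le i\le m} d_i(\varphi_i)$ (or the product $\prod_i d_i(\varphi_i)$), whose zero set is precisely that union; every closed $F$ is an intersection of such unions, so the collection of these conditions together with $\Sigma_0$ axiomatizes the desired class. Alternatively, you can avoid constructing axioms altogether, as the paper does: the class $\{\M: f_\M\in F\}$ is closed under isomorphism, under ultraroots (since $f_{\M^\U}=f_\M$), and under ultraproducts (since by \eqref{E:limit} the function $f_{\Pi_\U\M_j}$ is a pointwise limit point of $\{f_{\M_j}\}\subseteq F$, hence lies in the closed set $F$), so Theorem \ref{T:up}(3) yields axiomatizability without exhibiting the axioms. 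Your explicit approach, once repaired, buys slightly more --- a concrete axiomatization read off from the closed set --- at the cost of the bookkeeping you mention.
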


\begin{proof}
First recall that the range of any sentence lies in a bounded interval, so $\C$ lies in a product of bounded intervals, which is compact by Tychonoff.

An axiomatizable subclass $K$ is a collection of elementary equivalence classes, so corresponds to a subset of $\C$.  We claim that this subset must be closed.  For suppose $\{\M_j\} \subset K$ and $f_{\M_j} \to f$ pointwise.  Use \eqref{E:limit} and an appropriate ultrafilter $\U$ on the index set to compute, for any $\varphi \in S$,
$$f_{\Pi_\U \M_j}(\varphi) = \varphi^{\Pi_\U \M_j} = \lim_{j \to \U} \varphi^{\M_j} = \lim_{j \to \U} f_{\M_j}(\varphi) = f(\varphi).$$
Now $\Pi_\U \M_j \in K$ by Theorem \ref{T:up}(3), so $f = f_{\Pi_\U \M_j}$ must be in the corresponding subset of $\C$.

That any closed subset of $\C$ determines an axiomatizable class also follows from Theorem \ref{T:up}(3) and \eqref{E:limit}.
\end{proof}

\begin{remark}
We do not need to topologize the set $S$, although we could give it the weakest topology making all functions in $\C$ continuous.  Then separability of the language implies the separability of $S$.  As a consequence, the elementary equivalence class of a model $\M$ is determined by countably many values of $f_\M$, so that classification of $\T$ factors (also $C^*$-algebras, Banach spaces, etc.) up to elementary equivalence is \textit{smooth} in the sense of descriptive classification theory.
\end{remark}

We follow Henson \cite{H} in calling a subclass \textit{local} if it is closed under isomorphism, ultrapower, and ultraroot -- by Theorem \ref{T:up}(2) this is the same as being closed under elementary equivalence.  Thus local classes are in 1-1 correspondence with \textit{arbitrary} subsets of $\C$.  Note that the complement of an axiomatizable class need not be axiomatizable, but the complement of a local class is always local.  We freely identify a class with the property of belonging to that class.

Many Banach space properties are not even local, like ``$\X$ is reflexive" (not closed under ultrapower) or ``$\ell^1_2 \overset{\sim}{\hookrightarrow} \X$" (not closed under ultraroot, as we saw in Example \ref{E:square}).  From the preceding paragraphs one can use the topological structure of $\C$ to identify properties that are local but not axiomatizable, for instance the uniformly nonsquare Banach spaces.  (With $\psi$ as in Section 2.4, the set $\{f_\X \mid \: f_\X(\psi) > 0\}$ is open and not closed in $\C_{\text{Banach}}$.)  The capacity to define local properties syntactically is an advantage of continuous model theory over other approaches.

\subsection{Various classes of finite factors}

In the rest of this section we determine whether some commonly used properties of finite factors are local or axiomatizable.  For convenience fix $\U \in \beta \NN \setminus \NN$.

\subsubsection{Infinite dimensionality is axiomatizable; finite dimensionality is local but not axiomatizable} Infinite dimensionality is closed under ultraproducts and ultraroots.  Finite dimensionality is closed under ultraroots and ultrapowers, but not ultraproducts.

\subsubsection{Property $\Gamma$ is axiomatizable} \label{SS:gamma} A $\T$ factor $\M$ has \textit{property $\Gamma$} of Murray and von Neumann \cite{MvN} if for any finite set $F \subset \M$ and $\eps > 0$, there is a unitary $u \in \M$ such that $\tau(u) =0$ and $\max_{x \in F} \|[x,u]\|_2 < \eps$.

Consider the following sentences $\{\sigma_n\}$ about finite factors.  (Here and below, quantified variables range over $D_1$, the unit ball of the algebra.)
$$\sigma_n: \sup_{\bar{x}} \inf_y \|y^* y - I\|_2 + |\tau(y)| + \sum_{j=1}^n \|[x_j, y]\|_2.$$
The condition $\sigma_n^\M = 0$ says that for any $n$-tuple $\bar{x}$ in $\M$, there is an element $y$ that almost commutes with all members of $\bar{x}$, with $\|y^* y - I\|_2$ and $\tau(y)$ both small.  By a standard functional calculus argument, one can perturb $y$ slightly to an actual trace zero unitary \cite[Corollary 13.4.3]{SS}, at small expense to the quantities $\|[x_j, y]\|_2$.  The set $\{\sigma_n\}$ thus axiomatizes property $\Gamma$, cf. \cite[Definition 13.4.1]{SS}.

Let $m \geq 2$ and $L(\FF_m)$ be the $\T$ factor generated by the left regular representation of the free group $\FF_m$ on $\ell^2(\FF_m)$.  Murray and von Neumann showed that an element of $L(\FF_m)$ that nearly commutes with the unitaries associated to two of the generators must be close to the center, implying $\sigma_2^{L(\FF_m)} \neq 0$ \cite[Lemma 6.2.1-2]{MvN}.  Paired with the easy fact that $\R$ has $\Gamma$, this gives a culminating point of their work \cite[Theorem XVI$'$]{MvN}: $\R$ and $L(\FF_m)$ were the first example of nonisomorphic separable $\T$ factors.  We see here that they are not even elementarily equivalent.  It would be very interesting to obtain any specific information about the nonzero values of any $\sigma_n$.

\begin{remark}
In fact $\sigma_1^{L(\FF_m)} \neq 0$.  Let $u,v \in L(\FF_m)$ be unitaries corresponding to two of the group generators.  Then $W^*(u,v) = W^*(\text{Log}(u) + i \text{Log}(v)) \simeq L(\FF_2)$ is an irreducible subfactor of $L(\FF_m)$.  By \cite[Lemma 3.5]{FGL} the relative commutant $L(\FF_2)' \cap L(\FF_m)^\U$ is either nonatomic or $\CC$; the Murray-von Neumann calculation shows that it must be $\CC$, whence unitaries that nearly commute with $\text{Log}(u) + i \text{Log}(v)$ must be close to scalars and cannot have trace close to zero.
\end{remark}

\begin{question} \label{T:neggamma}
Are the non-$\Gamma$ $\T$ factors an axiomatizable class?  They would not be axiomatizable if and only if there are non-$\Gamma$ $\{\M_j\}$ such that $\sigma_n^{\M_j} \to 0$ for every $n$.  Since $\sigma_m^\M \leq \sigma_n^\M$ for $m \leq n$, this is the same as saying that the nonzero values of $\sigma_n$ accumulate at zero for every $n$.
\end{question}

It is immediate that a $\T$ factor $\M$ has $\Gamma$ if and only if
\begin{equation} \label{E:gamma}
\text{the relative commutant in $\M^\U$ of any finite set in $\M$ contains a trace zero unitary.}
\end{equation}
(Given a finite set in $\M$, obtaining unitaries by letting $\eps \to 0$ in the $\Gamma$ condition is essentially the same as having a representing sequence for a trace zero unitary in the relative commutant.)

\begin{proposition} \label{T:gamma1}
For a $\T$ factor $\M$, not necessarily separable, one still obtains a characterization of $\Gamma$ if any subset of the following changes to \eqref{E:gamma} are made: ``finite" to ``countable", ``$\M$" to ``$\M^\U$", ``contains a trace zero unitary" to ``is nontrivial".
\end{proposition}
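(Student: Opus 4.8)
The plan is to arrange the $2^3=8$ variants of \eqref{E:gamma} into a single chain of implications pinned at both ends by $\Gamma$. Among the three permitted substitutions, replacing ``finite'' by ``countable'' and ``$\M$'' by ``$\M^\U$'' each \emph{strengthens} the condition (there are more, and larger, relative commutants to control), while replacing ``contains a trace zero unitary'' by ``is nontrivial'' \emph{weakens} it. Write $(\dagger)$ for the strongest variant --- every \emph{countable} subset of $\M^\U$ has a trace zero unitary in its relative commutant in $\M^\U$ --- and $(\flat)$ for the weakest --- every finite subset of $\M$ has nontrivial relative commutant in $\M^\U$. Then every one of the eight variants $C$ satisfies $(\dagger)\Rightarrow C\Rightarrow(\flat)$, simply by weakening the quantifier, enlarging the algebra, and weakening the conclusion in the appropriate combination. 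Since \eqref{E:gamma} is equivalent to $\Gamma$ by the discussion preceding the proposition, it is enough to prove \textbf{(A)} $\Gamma\Rightarrow(\dagger)$ and \textbf{(B)} $(\flat)\Rightarrow\Gamma$.

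For \textbf{(A)}: property $\Gamma$ is axiomatized by the sentences $\sigma_n$ of \S\ref{SS:gamma}, so by \eqref{E:limit} $\sigma_n^{\M^\U}=\lim_{j\to\U}\sigma_n^\M=\sigma_n^\M=0$ and $\M^\U$ too has $\Gamma$. Fix a countable set $\{a_k:k\in\NN\}\subseteq\M^\U$ and consider the conditions in one variable $u$ (ranging over $D_1$), with the $a_k$ adjoined as parameters: $\|u^*u-I\|_2=0$, $\|uu^*-I\|_2=0$, $|\tau(u)|=0$, and $\|[u,a_k]\|_2=0$ for every $k$. Any finitely many of these are approximately satisfied in $\M^\U$: apply $\Gamma$ in $\M^\U$ to the finitely many $a_k$ involved to produce an honest trace zero unitary $u$ almost commuting with them, whereupon the listed quantities are either exactly $0$ or as small as prescribed. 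As there are only countably many parameters, $\aleph_1$-saturation of $\M^\U$ (Proposition \ref{T:sat}(1)) yields a single $u\in\M^\U$ meeting all the conditions, that is, a trace zero unitary in $\{a_k\}'\cap\M^\U$. This is $(\dagger)$.

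For \textbf{(B)} it is enough to show, for each finite $F\subseteq\M$, that if $N:=F'\cap\M^\U\neq\CC I$ then $N$ contains a trace zero unitary: feeding this into the hypothesis $(\flat)$ reproduces exactly \eqref{E:gamma}, hence $\Gamma$. Fix $F$ with $N\neq\CC I$. A non-scalar self-adjoint element of $N$ (take the real or imaginary part of a non-scalar element) has a spectral projection $p\in N$ with $0<\tau(p)<1$, so it is enough to find a projection of trace exactly $\tfrac12$ in $N$ --- then $p-(I-p)$ is the required unitary. Since the class of $\T$ factors is axiomatizable, the Downward L\"owenheim--Skolem theorem (Theorem \ref{T:dls}) lets one pass to a separable $\T$ factor $\M_0\preceq\M$ with $F\subseteq\M_0$; the nontriviality of $N$ is then equivalent to that of $F'\cap\M_0^\U$ (each being, via $\aleph_1$-saturation, the assertion that a certain formula in the parameters $F$ vanishes, a value unchanged between $\M_0$ and $\M$), and a trace $\tfrac12$ projection found in $F'\cap\M_0^\U$ automatically lies in $N$. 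So assume $\M$ separable. Now decompose along the center of $W^*(F)$: a diffuse summand of $Z(W^*(F))$ already lies in $N$ and contains a trace $\tfrac12$ projection, while over an atom $z\in Z(W^*(F))$ --- so $z\in\M$, and $W^*(F)z$ is a subfactor of the $\T$ factor $z\M z$ --- the pertinent part of $N$ is $(W^*(F)z)'\cap(z\M z)^\U=zNz$.

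This reduces matters to a separable subfactor $W^*(F)\subseteq\M$, and here the decisive input is the dichotomy of \cite[Lemma 3.5]{FGL}: the relative commutant of a separable subfactor inside an ultrapower of a $\T$ factor is either $\CC$ or nonatomic. As $N\neq\CC$, it is nonatomic, hence has projections of every trace in $[0,1]$; reassembling a trace $\tfrac12$ projection across the central decomposition is a routine combinatorial step (using that a finite-dimensional corner of $W^*(F)$ has, in any ultrapower, a relative commutant which is a $\T$ factor and so nonatomic). The one place the argument leans on external input --- and what I expect to be the genuine obstacle --- is precisely this passage from ``nontrivial'' to ``contains a trace zero unitary'': it must be excluded that $F'\cap\M^\U$ is a small finite-dimensional abelian algebra with unbalanced weights (which would carry no trace zero unitary), and that exclusion is exactly the nonatomicity furnished by \cite[Lemma 3.5]{FGL}.
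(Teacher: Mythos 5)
Your reduction of the eight variants to the two implications $\Gamma\Rightarrow(\dagger)$ and $(\flat)\Rightarrow\Gamma$ is sound, and your proof of \textbf{(A)} --- axiomatizability of $\Gamma$ gives $\Gamma$ for $\M^\U$, then $\aleph_1$-saturation (Proposition \ref{T:sat}(1)) upgrades approximate to exact commutation over countably many parameters --- is correct. The gap is in \textbf{(B)}. You set out to prove the \emph{unconditional} statement that for every finite $F\subseteq\M$, if $N=F'\cap\M^\U\neq\CC$ then $N$ contains a trace zero unitary, resting on a dichotomy you attribute to \cite[Lemma 3.5]{FGL}: that the relative commutant of a separable subfactor in $\M^\U$ is either $\CC$ or nonatomic. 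That lemma requires the subfactor to be \emph{irreducible} in $\M$ (this is exactly how it is invoked elsewhere in this paper), and without irreducibility both the dichotomy and your unconditional statement are false. Concretely, take $\M=L(\FF_2)$, let $q$ be a projection of trace $2/3$, and let $F=\{q\}\cup F_1\cup F_2$ where $F_1,F_2$ are finite generating sets of the corners $q\M q$ and $q^\perp\M q^\perp$ (interpolated free group factors: finitely generated, full, so with trivial central sequence algebras). Then $F'\cap\M^\U=\CC q+\CC q^\perp$, which is nontrivial but by Lemma \ref{T:gamma}(2) contains no unitary of trace smaller than $1/3$ in modulus. So the ``routine combinatorial step'' of reassembling a trace $\tfrac12$ projection across the central decomposition cannot exist at this level of generality; the finite-dimensional abelian relative commutant with unbalanced weights that you hoped to exclude really does occur.

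The repair --- and this is the actual content of the argument --- is that the hypothesis $(\flat)$ must be applied not just to $F$ but to a suitably \emph{enlarged} finite set. If $N=F'\cap\M^\U$ contains no trace zero unitary, Lemma \ref{T:gamma}(2) forces $N$, and then $F'\cap\M$, to contain a minimal projection $p$ of trace $>1/2$, which is central in $F'\cap\M$ by Lemma \ref{T:gamma}(1). Choosing a unitary $u\in\M$ with $p^\perp\leq upu^*$, one checks that the finite set $F\cup\{u\}$ generates an irreducible subfactor of $\M$; now \cite[Lemma 3.5]{FGL} legitimately applies and says $(F\cup\{u\})'\cap\M^\U$ is either $\CC$ or nonatomic. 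It sits inside $N$, which has a minimal projection, so it is not nonatomic; hence it is $\CC$, contradicting $(\flat)$ applied to $F\cup\{u\}$. By fixing attention on the single set $F$, your argument never gets access to this contradiction, and that is precisely the missing idea.
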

Some of the equivalences between the eight conditions in Proposition \ref{T:gamma1} were mentioned without proof in \cite[Example 4.2]{S2009}, and we feel compelled to justify them in this paper.  But the details are largely unrelated to our main narrative, so we postpone them to an appendix.  For \textit{separable} factors Proposition \ref{T:gamma1} is well-known, as is the equivalence between $\Gamma$ and the condition $\M' \cap \M^\U \neq \CC$.  One might ask whether this equivalence persists for nonseparable factors -- we show next that it does not, and in fact the condition $\M' \cap \M^\U \neq \CC$ is not even local.  The reason is that, under our standing assumption that $\U \in \beta \NN \setminus \NN$, $\M^\U$ is only guaranteed to be $\aleph_1$-saturated (Proposition \ref{T:sat}(1)).  But property $\Gamma$ is equivalent to having nontrivial relative commutant in \textit{some} ultrapower.

\begin{proposition} \label{T:gammarc} ${}$
\begin{enumerate}
\item There exists a (nonseparable) $\T$ factor $\M$ such that $\M' \cap \M^\U = \CC$ for $\U \in \beta \NN \setminus \NN$, but the relative commutant in $\M$ of any countable set in $\M$ contains $\Mt$ unitally (so $\M$ has $\Gamma$).
\item The condition $\M' \cap \M^\U \neq \CC$ is not local.
\item Property $\Gamma$ is equivalent to the existence of an ultrafilter $\V$ (on some possibly uncountable set) such that $\M' \cap \M^\V \neq \CC$.
\end{enumerate}
\end{proposition}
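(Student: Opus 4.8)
\textbf{The plan} is to handle the three parts in turn: (2) will fall out of (1) together with the known separable characterization of $\Gamma$, while (1) and (3) each need their own argument.

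\textbf{Part (1).} I would realize $\M$ as the union of a continuous chain $(\M_\alpha)_{\alpha<\omega_1}$ of \emph{separable, non-$\Gamma$} $\T$ factors: put $\M_0=\R$, take unions at limit stages (passing to the generated von Neumann algebra), and at successor stages set $\M_{\alpha+1}:=(\M_\alpha\,\bar\otimes\,\Mt)*L(\ZZ)$, the reduced free product with respect to the traces. Three points drive the argument. (i) Each $\M_{\alpha+1}$ is a separable $\T$ factor that is non-$\Gamma$ (by the classical fact that a reduced free product of two diffuse finite von Neumann algebras is a full, i.e.\ non-$\Gamma$, $\T$ factor), and it contains the copy $1\otimes\Mt$ commuting with $\M_\alpha$. (ii) Because $\omega_1$ is regular while every sequence and every countable subset meets only countably many of the $\M_\alpha$, the union $\M:=\bigcup_{\alpha<\omega_1}\M_\alpha$ is \emph{already} strongly closed---no completion is needed---and is a $\T$ factor, since a central element of $\M$ lies in some $\M_\alpha\subseteq\M_{\alpha+1}$ and is thus scalar in the factor $\M_{\alpha+1}$; it is nonseparable as the chain strictly increases. (iii) Any countable $C\subseteq\M$ lies in some $\M_\beta$, and then $1\otimes\Mt\subseteq\M_{\beta+1}\subseteq\M$ commutes with $\M_\beta\supseteq C$, so $\Mt\subseteq C'\cap\M$ unitally; in particular $\M$ has $\Gamma$. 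Finally $\M'\cap\M^\U=\CC$: a representing sequence $(a_n)$ of an element of $\M'\cap\M^\U$ has all its terms in one $\M_\beta$ ($\beta<\omega_1$ a countable supremum of countable ordinals), so $(a_n)\in(\M_{\beta+1})^\U$ and, commuting with $\M_{\beta+1}$, lies in $\M_{\beta+1}'\cap\M_{\beta+1}^\U$, which is $\CC$ because $\M_{\beta+1}$ is separable and non-$\Gamma$.

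\textbf{Part (2).} This then follows at once: the factor $\M$ of (1) has $\Gamma$, so by downward L\"owenheim--Skolem (Theorem~\ref{T:dls}) it has a separable elementary submodel $\M_0\equiv\M$, which inherits $\Gamma$ (an axiomatizable property), whence $\M_0'\cap\M_0^\U\ne\CC$ by the known equivalence for separable factors. Thus $\M_0\equiv\M$ while $\M'\cap\M^\U\ne\CC$ holds for $\M_0$ and fails for $\M$, so this condition is not closed under elementary equivalence, hence not local.

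\textbf{Part (3).} For the forward direction I would saturate: if $\M$ has $\Gamma$, fix $\kappa>\dens(\M)$ and, by Proposition~\ref{T:sat}(2), an ultrafilter $\V$ with $\M^\V$ $\kappa$-saturated; over a $\|\cdot\|_2$-dense $D\subseteq\M$ with $|D|<\kappa$, the type ``$z$ is a trace-zero unitary with $[z,a]=0$ for every $a\in D$'' is finitely approximately satisfiable (its finite fragments met by the $\Gamma$ witnesses of $\M$), so it is realized by some $u\in\M^\V$; commuting with $D$ forces $u\in\M'\cap\M^\V$, and $u\notin\CC$. For the converse, suppose $\M'\cap\M^\V\ne\CC$ and fix a separable elementary submodel $\M_0\preceq\M$; since $\Gamma$ is axiomatized by the $\sigma_n$, it is enough to show $\M_0$ has $\Gamma$, and by the separable case it is enough to find a non-scalar element of $\M_0'\cap\M_0^\U$. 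Pick a non-scalar $a\in\M'\cap\M^\V$; as $\M'\cap\M^\V$ is a $*$-algebra we may take $a=a^*$, $\|a\|\le1$, $\tau(a)=0$, $\|a\|_2\ge\rho>0$. Since $a$ commutes with $\M\supseteq\M_0$, it realizes over $\M_0$ the type $p(z)$ consisting of these conditions together with $[z,x]=0$ for $x$ in a countable $\|\cdot\|_2$-dense subset of $\M_0$. Because $\M_0\preceq\M\preceq\M^\V$, every finite fragment of $p$ is already approximately satisfiable in $\M_0$; as $p$ has countably many parameters, $\aleph_1$-saturation of $\M_0^\U$ (Proposition~\ref{T:sat}(1)) produces a realization $a'\in\M_0'\cap\M_0^\U$ with $\|a'\|_2\ge\rho>0$, hence non-scalar.

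\textbf{Main obstacle.} I expect the delicate part to be the converse in (3). The relative commutant is \emph{not} an elementary invariant---that is precisely what (2) says---so one cannot transport a witnessing element directly between elementarily equivalent factors; the argument must instead route it through a type with only \emph{countably many} parameters inside a separable elementary submodel, which is exactly what makes $\aleph_1$-saturation of the ordinary ultrapower available and lets us fall back on the classical separable equivalence $\Gamma\Leftrightarrow\M'\cap\M^\U\ne\CC$. For (1) the corresponding sticking point is purely operator-algebraic and would be dispatched by citation: one needs reduced free products of diffuse finite von Neumann algebras to be non-$\Gamma$, so that ``non-$\Gamma$ at every separable stage'' survives all the way up the tower even though $\Gamma$ reappears at each limit stage before the next free-product step destroys it again.
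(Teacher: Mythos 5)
Your proposal is correct, and for parts (1) and (2) and the forward direction of (3) it is essentially the paper's argument: the same $\omega_1$-tower alternating ``plant a commuting copy of $\Mt$'' with ``kill $\Gamma$ by a tracial free product'' (the paper separates these into two steps, $\N_\gamma = \M_\gamma * \Mt$ and $\M_{\gamma+1} = \Mt \otimes \N_\gamma$, and cites Barnett for fullness of the free products, exactly as you do), the same cofinality argument for closedness of the union and triviality of central sequences, the same downward L\"owenheim--Skolem step for (2), and the same saturation argument for $\Gamma \Rightarrow \M'\cap\M^\V \neq \CC$. Where you genuinely diverge is the converse of (3): the paper extracts, for each finite $\{x_k\}\subset\M$, far-from-scalar almost-commuting elements directly from a representing sequence of the nonscalar element of $\M'\cap\M^\V$, producing a nonscalar element of $\{x_k\}'\cap\M^\U$, and then invokes Proposition \ref{T:gamma1} (the appendix result on finite relative commutants); you instead descend to a separable elementary submodel $\M_0\preceq\M$, realize the countable commutation type in $\M_0^\U$ by $\aleph_1$-saturation, apply the classical separable equivalence $\Gamma\Leftrightarrow\M_0'\cap\M_0^\U\neq\CC$, and climb back up via axiomatizability of $\Gamma$. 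Your route avoids Proposition \ref{T:gamma1} entirely at the cost of leaning on the separable case plus the $\sigma_n$ axiomatization; both are sound, and the transfer of finite fragments through $\M_0\preceq\M\preceq\M^\V$ is exactly the point that makes your version work. (Minor quibble: your opening description of the chain as consisting of non-$\Gamma$ factors is false at limit stages, but you correct this yourself, and the argument only needs the successor stages to be non-$\Gamma$ and cofinal.)
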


\begin{proof}
(1): Our example will be the union of an increasing family of separable $\T$ factors $\{\M_\alpha | \: \alpha < \aleph_1\}$ defined by transfinite recursion.  Let $\M_1 = \R$.  Let $\N_1$ be the tracial free product $\M_1 * \Mt$, which is a separable non-$\Gamma$ $\T$ factor by \cite[Theorem 11]{Ba}.  Now we assume that $\M_\alpha$ and $\N_\alpha$ have been defined for all $\alpha < \beta$, and we explain how to construct $\M_\beta$ and $\N_\beta$.  If $\beta$ is a successor ordinal, say $\beta = \gamma + 1$, set $\M_\beta = \Mt \otimes \N_\gamma$.  If $\beta$ is a limit ordinal, set $\M_\beta = \overline{\cup_{\gamma < \beta} \M_\gamma}$.  (The closure is the usual one.  Represent the union on the Hilbert space coming from the GNS construction for the coherent trace; the closure in the strong operator topology is again a $\T$ factor.)  In either case, set $\N_\beta = \M_\beta * \Mt$.  For ordinals $\alpha < \beta$ we have $\M_\alpha \subset \N_\alpha \subset \M_\beta$, and we set $\M = \cup_{\alpha < \aleph_1} \M_\alpha$.  As explained in \cite[proof of Theorem 2.5]{S2010}, this last union is a $\T$ factor (no closure required).

Let $\{x_j\} \subset \M$ be countable.  For each $j$ there is some $\alpha_j < \aleph_1$ such that $x_j \in \M_{\alpha_j}$.  Then $\sup \alpha_j < \aleph_1$, so that
$$\{x_j\} \subset \M_{\sup \alpha_j} \subset \N_{\sup \alpha_j} = \N_{\sup \alpha_j} \otimes I \overset{(*)}{\subset} \N_{\sup \alpha_j} \otimes \Mt = \M_{(\sup \alpha_j) + 1} \subset \M.$$
From (*), $\{x_j\}' \cap \M$ contains a copy of $\Mt$.

Finally we prove that central $\U$-sequences are trivial.  Suppose $(x_j) \in \M' \cap \M^\U$.  As in the previous paragraph, all $x_j$ belong to some $\N_\alpha$, so $(x_j) \in \N_\alpha' \cap \N_\alpha^\U$, but the latter is $\CC$ since $\N_\alpha$ lacks $\Gamma$.

(2): Let $\M$ be as in (1) and use the downward L\"{o}wenheim-Skolem theorem to find a separable elementary submodel $\N \subseteq \M$.  Since $\M$ has $\Gamma$, so does $\N$, and by separability $\N' \cap \N^\U \neq \CC$.

(3): Suppose $(y_i) \in \M' \cap \M^\V$ and $\text{dist}((y_i), \CC) = c > 0$.  Then for any finite $\{x_k\} \subset \M$ and $j \in \NN$, there is $i_j$ such that $\max_k \{\|[y_{i_j}, x_k]\|\} < \frac{1}{j}$ and $\text{dist}(y_{i_j}, \CC) > \frac{c}{2}$.  This implies that the relative commutant in $\M^\U$ of $\{x_k\}$ contains the nonscalar element $(y_{i_j})$, so $\M$ has $\Gamma$ by Proposition \ref{T:gamma1}.

On the other hand, suppose $\M$ has $\Gamma$, and let $\{x_i\}_{i \in I}$ be a dense set in $\M$.  Using Proposition \ref{T:sat}(2), there exists an ultrapower $\M^\V$ that is $\kappa$-saturated for some $\kappa$ greater than $|I|$.  Consider the collection of conditions $\{\|[u, x_i]\| = 0\}_i \cup \{\|u^*u - I\| = 0\} \cup \{\tau(u)=0\}$.  By $\Gamma$ any finite subset has a common solution in $\M^\V$, so by saturation the entire collection has a common solution, i.e., a trace zero unitary in $\M' \cap \M^\V$.
\end{proof}

\begin{remark}
The Effros-Mar\'{e}chal topology on von Neumann subalgebras of $\B(\ell^2)$ (see \cite{HW1,HW2}) can be relativized to the Borel subset of $\T$ subfactors of $\B(\ell^2)$.  From this one can induce a quotient topology on elementary equivalence classes of $\T$ factors, or equivalently on $\C_{\T}$, and we point out that it is distinct from the pointwise topology on $\C_{\T}$.  For instance, factors with $\Gamma$ are pointwise closed, being axiomatizable.  But in the quotient of the Effros-Mar\'{e}chal topology, the closure of the factors with $\Gamma$ contains the closure of the hyperfinite factors, which is exactly the set of factors that embed in an ultrapower of $\R$ \cite[Theorem 5.8]{HW2}.  It is known that the latter contains many non-$\Gamma$ factors, such as the free group factors (see \cite{O2}).
\end{remark}


\subsubsection{Axiomatizability of McDuffness depends on the definition} \label{SS:mcd}
A separable $\T$ factor that satisfies any one of the equivalent conditions in Proposition \ref{T:mcd} below is called \textit{McDuff}.  Depending on the choice of definition for general $\T$ factors, McDuffness may or may not be axiomatizable -- the same lesson as in Proposition \ref{T:gammarc}(2).

\begin{proposition} \label{T:mcd}
For a separable $\T$ factor $\M$, the following conditions are equivalent:
\begin{enumerate}
\item for any finite subset $\{x_j\} \subset \M$, $W^*(\{x_j\})' \cap \M^\U \supseteq \Mt$;
\item same as (1), making one or both of the following changes: ``finite" to ``countable", ``$\{x_j\} \subset \M$" to ``$\{x_j\} \subset \M^\U$";
\item $\M' \cap \M^\V \supseteq \Mt$ for some ultrafilter $\V$ (possibly on an uncountable set);
\item $\M' \cap \M^\U \supseteq \Mt$;
\item $\M' \cap \M^\U$ is noncommutative;
\item $\M \simeq \M \otimes \R$.
\end{enumerate}
For arbitrary $\T$ factors, properties (1)-(3) define the same axiomatizable class, while properties (4)-(6) are not even local.
\end{proposition}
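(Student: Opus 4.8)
The strategy is to split the six conditions into the ``reindexable'' group (1)--(3) and the ``rigid'' group (4)--(6). For separable $\M$ I would deduce $(4)\Leftrightarrow(5)\Leftrightarrow(6)$ from McDuff's theorem \cite{M} (together with the elementary facts that $\R\simeq\R\otimes\R$ and $\Mt\subseteq\R'\cap\R^\U$), and obtain the remaining separable equivalences $(1)\Leftrightarrow(2)\Leftrightarrow(3)\Leftrightarrow(4)$ from the axiomatization of (1) below combined with $\aleph_1$-saturation of $\M^\U$ (Proposition \ref{T:sat}(1)): $(4)\Rightarrow(3)$ is trivial (take $\V=\U$), $(3)\Rightarrow(1)\Rightarrow(2)$ are the saturation arguments described below, and for \emph{separable} $\M$ the axiomatized form of (1) together with $\aleph_1$-saturation of $\M^\U$ realizes a copy of $\Mt$ commuting with a countable dense subset of $\M$, which is (4). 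So the real content is the two assertions about arbitrary $\T$ factors.

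To axiomatize (1), I would mimic the sentences $\sigma_n$ of \S\ref{SS:gamma}, using that a unital copy of $\Mt$ is generated by a projection $p$ with $\tau(p)=\tfrac12$ together with a partial isometry $v$ with $v^*v=p$ and $vv^*=I-p$:
$$\tilde\sigma_n:\ \sup_{\bar x}\ \inf_{p,v}\ \Big[\ \|p-p^*\|_2+\|p^2-p\|_2+\big|\tau(p)-\tfrac12\big|+\|v^*v-p\|_2+\|vv^*-(I-p)\|_2+\sum_{j=1}^n\big(\|[x_j,p]\|_2+\|[x_j,v]\|_2\big)\ \Big],$$
all quantified variables ranging over $D_1$. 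For any $\T$ factor $\M$ and finite $\{x_j\}\subset\M$, the condition $\tilde\sigma_n^\M=0$ says (via \L o\'{s}, passing to $\M^\U$) that the equations ``$p$ is a projection of trace $\tfrac12$, $v$ a partial isometry from $p$ to $I-p$, and $p$ and $v$ commute with $x_1,\dots,x_n$'' are finitely approximately satisfiable in $\M^\U$; since only finitely many parameters occur, $\aleph_1$-saturation of $\M^\U$ realizes them exactly, producing a unital copy of $\Mt$ in $W^*(\{x_j\})'\cap\M^\U$. Conversely, reading a representing sequence (of norm $\leq1$) of such a copy of $\Mt$ at a suitable coordinate shows $(1)\Rightarrow\tilde\sigma_n^\M=0$ for all $n$. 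Hence (1) is exactly the axiomatizable class $\Mod(\{\tilde\sigma_n\})$.

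For arbitrary $\T$ factors I would then prove $(1)\Leftrightarrow(2)\Leftrightarrow(3)$ along the lines of Proposition \ref{T:gammarc}(3). For $(1)\Rightarrow(3)$: fix a dense set $\{x_i\}_{i\in I}\subset\M$ and, using Proposition \ref{T:sat}(2), a $\kappa$-saturated ultrapower $\M^\V$ with $\kappa>|I|$; by $\tilde\sigma_n^\M=0$ the conditions ``$p,v$ generate $\Mt$ and commute with every $x_i$'' are finitely approximately satisfiable in $\M^\V$, hence jointly realized, so $\Mt\subseteq\M'\cap\M^\V$. For $(3)\Rightarrow(1)$: given $\Mt\subseteq\M'\cap\M^\V$, a norm-bounded representing sequence of its matrix units is, at $\V$-most coordinates, $\eps$-close to satisfying the defining equations and to commuting with any prescribed finite $\bar x\subset\M$, which forces $\tilde\sigma_n^\M=0$, hence (1). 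The implication $(2)\Rightarrow(1)$ is trivial, and $(1)\Rightarrow(2)$ in its strongest form (countable $\{x_j\}\subset\M^\U$) follows by applying $\tilde\sigma_n^\M=0$ coordinatewise to obtain, for each finite subfamily, an approximate $\Mt$ approximately commuting with it, and then invoking $\aleph_1$-saturation of $\M^\U$. This shows that (1)--(3) name a single axiomatizable class.

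For the non-locality of (4)--(6) I would recycle the nonseparable $\T$ factor $\M=\bigcup_{\alpha<\aleph_1}\M_\alpha$ from the proof of Proposition \ref{T:gammarc}(1). That proof shows the relative commutant in $\M$ of any countable subset contains a unital copy of $\Mt$, so $\M$ satisfies (1); and it shows $\M'\cap\M^\U=\CC$, so (4) and (5) fail for $\M$, and (6) fails as well, since $\M\simeq\M\otimes\R$ would make $1\otimes(\R'\cap\R^\U)\subseteq(\M\otimes\R)'\cap(\M\otimes\R)^\U$ contain a copy of $\Mt$, contradicting $\M'\cap\M^\U=\CC$. By the axiomatizability of (1) and the downward L\"owenheim--Skolem theorem (Theorem \ref{T:dls}), there is a separable $\N\equiv\M$ satisfying (1), hence --- being separable --- satisfying (4)--(6) as well. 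So each of the classes defined by (4), (5) and (6) contains $\N$ but not the elementarily equivalent $\M$, so none of them is local. I expect the main obstacle to be the approximate-to-exact bookkeeping behind every reindexation step: verifying that $\tilde\sigma_n^\M=0$ for all $n$ is genuinely equivalent to (1), and that an approximate copy of $\Mt$ commuting with a finite set can be promoted to an exact one commuting with a countable set. The delicate point is that for nonseparable $\M$ the ultrapower $\M^\U$ is only $\aleph_1$-saturated, which is precisely why (3) (with a large, variable $\V$) and (4) (with $\U\in\beta\NN\setminus\NN$) must be kept apart, and one must take care to invoke $\M^\V$ wherever $\aleph_1$-saturation of $\M^\U$ does not suffice.
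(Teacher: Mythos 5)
Your proposal is correct and follows essentially the same route as the paper: condition (1) is axiomatized by sentences of the form $\sup_{\bar x}\inf(\cdots)$ asserting an approximate off-diagonal matrix unit nearly commuting with $\bar x$ (the paper's $\sigma'_n$ uses a single variable $y$ with $\|yy^*y-y\|_2+\|y^*y+yy^*-I\|_2$ in place of your pair $p,v$, and a functional-calculus perturbation where you invoke saturation, but these are interchangeable), the equivalences (1)--(3) are obtained by the same saturation arguments as in Propositions \ref{T:gamma1} and \ref{T:gammarc}(3), McDuff's theorem handles (4)--(6) in the separable case, and non-locality of (4)--(6) comes from the example of Proposition \ref{T:gammarc}(1,2). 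No substantive differences.
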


\begin{proof}[Idea of proof]
The original paper of McDuff \cite{M} covers the equivalence of (4)-(6) in the separable case.

Note that property (1) can be axiomatized by $\{\sigma'_n\}$, where
$$\sigma'_n: \sup_{\bar{x}} \inf_y \|yy^*y - y\|_2 + \|y^*y + yy^* - I\|_2 + \sum_{j=1}^n \|[x_j, y]\|_2,$$
cf. \cite[Remark 2.12]{Sdiv}.  The idea is that if $y$ makes this quantity nearly zero, it almost commutes with the $\{x_j\}$ and is very close to a partial isometry between equivalent complementary projections; at small cost to the commutator norms, $y$ can be perturbed to a matrix unit $e_{12}$ in some copy of $\Mt$.  Again it would be significant to calculate any nonzero values for these sentences.

Arguments similar to Proposition \ref{T:gamma1} and Proposition \ref{T:gammarc}(3) then establish that (1)-(3) are equivalent for arbitrary factors.  Back in the separable case, obviously [(2) with ``countable"] $\Rightarrow$ (4) $\Rightarrow$ (3), so all six are equivalent.

The example in Proposition \ref{T:gammarc}(1,2) shows that none of (4)-(6) is local.
\end{proof}

\subsubsection{Hyperfiniteness is not local}  This follows immediately from Theorem \ref{T:main} below, as any separable $\M \equiv \R$ with $\M \not \simeq \R$ is necessarily not hyperfinite.

\subsubsection{Primeness is not local} A $\T$ factor is said to be \textit{prime} if it cannot be written as the tensor product of two $\T$ factors.  Obviously not all separable $\T$ factors are prime -- consider any tensor product -- but $\M^\U$ is prime for every separable $\M$ \cite[Theorem 4.5]{FGL}.

\subsubsection{None of the following classes are local, unless the last is just $\CC$: finite factors containing a generator, finite factors containing an irreducible element, finite factors containing an element with no nontrivial invariant projection}

There exist singly-generated $\T$ factors, such as $\R$, but $\R^\U$ is nonseparable and so cannot even be countably-generated.  A similar principle (formally, the upward L\"{o}wenheim-Skolem theorem) shows that no bound on the number of generators is local.

An element $x \in \M$ is said to be \textit{irreducible} if $W^*(x)' \cap \M = \CC$.  Every separable $\T$ factor has an irreducible element, since it has an irreducible hyperfinite (so singly-generated) subfactor \cite[Corollary 4.1]{P1981}.  But if $\M$ is a separable $\T$ factor satisfying $\sigma_1^\M = 0$ (in particular, if $\M$ has $\Gamma$), then $\M^\U$ does not have an irreducible element.  For given any $(x_j) \in \M^\U$, choose $\{u_j\}$ to be a sequence of trace zero unitaries with $\|[u_j, x_j]\|_2 < \frac1j$; then $(u_j)$ is a nonscalar element of $ W^*((x_j)_j)' \cap \M^\U$.

If $x,p \in \M$ satisfy $xp = pxp$, and $p$ is a projection, then in any representation of $\M$ the range of $p$ is an invariant subspace of $x$.  We call $p$ an \textit{invariant projection} for $x$.  For $x \in \Mn$, the projection onto the span of an eigenvector is invariant.  But it is unknown whether every element of a $\T$ factor has a nontrivial invariant projection.  Fang and Hadwin \cite[Theorem 2.1]{FH} showed that for $\M$ a separable $\T$ factor, every element of $\M^\U$ has many nontrivial invariant projections, so this class cannot be local unless it is just $\CC$.

\section{Isomorphic and nonisomorphic ultrapowers}

\textit{In this section all ultrapowers are based on free ultrafilters of $\NN$, except where noted.}

Given two separable $\T$ factors $\M$ and $\N$, one might ask whether their ultrapowers are isomorphic.  Ultrapowers of infinite dimensional objects are nonseparable, making it impractical to construct an isomorphism -- in fact, as we recall below, the answer can depend on the choice of ultrafilters.  A better question is whether the two factors have any isomorphic ultrapowers at all (not necessarily based on free ultrafilters of $\NN$); by Theorem \ref{T:up}(2), this is the same as asking whether they are elementarily equivalent.  In this section we present some questions and answers on this topic.

\textbf{Q.}  If $\M$ is a separable model, must all ultrapowers of $\M$ be isomorphic?

\textbf{A.} This is the main result of \cite{FHS2}.  To summarize it, we first recall that a theory $T$ is said to have the \textit{order property} if there exists a formula $\varphi(\bar{x}, \bar{y})$ and a model of $T$ containing an infinite sequence of tuples $(\bar{a}_j)$ such that $\varphi(\bar{a}_i, \bar{a}_j)$ is 0 if $i \leq j$ and 1 if $i > j$.  In other words, one can encode $(\NN, \leq)$ in a model of $T$.

\begin{theorem} \label{T:ch} $($\cite{FHS1,FHS2}$)$
Let $\M$ be separable in a separable language.
\begin{enumerate}
\item All ultrapowers of $\M$ are isomorphic if and only if at least one of the following conditions is met:
\begin{enumerate}
\item the continuum hypothesis (CH) is assumed;
\item $\Th(\M)$ does not have the order property.
\end{enumerate}
\item If $\M$ is a $C^*$-algebra, then $\Th(\M)$ has the order property unless $\M$ is finite-dimensional.  If $\M$ is a tracial von Neumann algebra, then $\Th(\M)$ has the order property unless $\M$ is of type I.
\end{enumerate}
So for separable $\T$ factors and separable infinite-dimensional $C^*$-algebras, the uniqueness of the ultrapower is equivalent to CH.
\end{theorem}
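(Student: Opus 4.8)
Granting parts (1) and (2), the final sentence is a short corollary, so the plan is to make that reduction precise and then indicate how (1) and (2) themselves would be argued. For the reduction: a separable $\T$ factor is not of type I, and a separable infinite-dimensional $C^*$-algebra falls under the $C^*$-algebra clause of (2); hence in both cases $\Th(\M)$ has the order property, and the alternative ``$\Th(\M)$ does not have the order property'' in (1) is simply unavailable. So (1) collapses to: all ultrapowers of $\M$ are isomorphic if and only if CH holds. (Infinite-dimensionality is what forces $\dens(\M^\U)=\cc$ and so makes CH relevant in the ``if'' direction.) That is exactly the final sentence. Parts (1) and (2) are the content of \cite{FHS1,FHS2}; the next two paragraphs sketch how I would establish them.

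For (1): the implication ``CH $\Rightarrow$ uniqueness'' is the continuous-logic version of the uniqueness of saturated models of a given cardinality. Under CH, for $\U$ free on $\NN$ the ultrapower $\M^\U$ is $\aleph_1$-saturated by Proposition \ref{T:sat}(1) and has density character $\cc=\aleph_1$; a back-and-forth of length $\aleph_1$, extending a separable partial isomorphism at each stage by using $\aleph_1$-saturation to realize the required type, produces an isomorphism between any two such ultrapowers. The implication ``no order property $\Rightarrow$ uniqueness'' is the stability-theoretic half: when $\Th(\M)$ is stable its ultrapowers over free ultrafilters on $\NN$ turn out to be pairwise isomorphic with no set-theoretic hypothesis, this being a continuous-logic analogue of classical results around the Keisler order. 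The remaining, genuinely set-theoretic, direction is that if $\neg$CH holds while $\Th(\M)$ has the order property, then two ultrapowers of $\M$ can be made non-isomorphic: using the order property one transports into the two ultrapowers linearly ordered configurations that differ in cofinality or in saturation behaviour --- $\neg$CH is precisely what leaves room to do this with ultrafilters on $\NN$ --- and since any isomorphism must respect the formula coding the order, the ultrapowers cannot be isomorphic.

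For (2): one must produce a single formula $\varphi(\bar x,\bar y)$ and, inside a sufficiently saturated model of $\Th(\M)$, an infinite sequence of tuples $(\bar a_j)$ realizing the order pattern of $(\NN,\le)$. The natural candidate is a chain of subalgebras whose inclusion relation tracks $\le$: in the $\T$ factor case $\M$ contains a copy of $\R\simeq\Mt^{\otimes\NN}$, and one lets $\bar a_i$ generate the subfactor on the first $i$ tensor legs, so that $W^*(\bar a_i)\subseteq W^*(\bar a_j)\iff i\le j$; $\varphi$ then tests, via near-commutation with (named generators of) the appropriate relative commutants, whether the algebra coded by $\bar a_i$ sits inside the one coded by $\bar a_j$. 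The $C^*$ case runs similarly with a chain of hereditary subalgebras generated by initial segments of an infinite orthogonal family of positive contractions. I expect \emph{this} step to be the main obstacle. The formula $\varphi$ must be continuous for the $\|\cdot\|_2$-metric, which rules out the most tempting witnesses --- a chain of projections detected by operator-norm differences is unavailable because the operator norm is not $\|\cdot\|_2$-continuous, and a chain detected by $\|\cdot\|_2$-differences has increments summing to at most $1$ and so cannot be kept uniformly bounded away from $0$ --- so a workable $\varphi$ requires genuine care with uniform quantitative estimates relating commutation with a generator to commutation with the subalgebra it generates. This is the substantive work carried out in \cite{FHS1,FHS2}; once such a $\varphi$ is available, realizing the exact $0/1$ pattern is a routine compactness argument in an $\aleph_1$-saturated (or larger) elementary extension, and the final statement of Theorem \ref{T:ch} then follows from the reduction above.
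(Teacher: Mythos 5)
This theorem is quoted from \cite{FHS1,FHS2}; the paper itself supplies no proof of parts (1) and (2), and the only argument it gives in-text is the easy direction of (1), namely Remark \ref{T:rem}(1): under CH every ultrapower of a separable $\M$ over a free ultrafilter on $\NN$ is $\aleph_1$-saturated with density character $\cc=\aleph_1$, and two $\kappa$-saturated elementarily equivalent models of density character $\kappa$ are isomorphic. Your back-and-forth sketch of the CH direction is exactly this argument, and your reduction of the final sentence to (1) and (2) (a $\T$ factor is not type I, an infinite-dimensional $C^*$-algebra is not finite-dimensional, so alternative (b) is unavailable) is correct and is precisely how the paper intends the corollary. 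To that extent the proposal matches the paper.

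For the parts the paper defers entirely to \cite{FHS1,FHS2} — the ``stable $\Rightarrow$ unique ultrapowers'' direction, the ``$\neg$CH $+$ order property $\Rightarrow$ nonisomorphic ultrapowers'' direction, and the construction of an order-property formula in (2) — what you offer is a plan rather than a proof, and in (2) the plan has a gap that you yourself identify but do not close. Your candidate witness (a chain of subalgebras, with $\varphi$ detecting inclusion via near-commutation with relative commutants) runs into exactly the obstruction you name: any monotone chain measured in $\|\cdot\|_2$ has increments whose squares sum to at most $1$, so the $0/1$ dichotomy required by the order property cannot be achieved by naive containment data, and ``commutes approximately with a generator'' does not uniformly control ``commutes approximately with the generated algebra.'' The actual argument in \cite{FHS1} does not proceed by a chain of subalgebras at all; it builds the ordered configuration out of a tensorially independent family inside a copy of $\R$ (so that the relevant inner products can be computed exactly and kept uniformly separated), which is a genuinely different device from the one you propose. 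Similarly, the $\neg$CH direction requires a concrete invariant (cuts/gaps in the linear orders coded by the formula, distinguished by a suitable choice of ultrafilters), not merely the assertion that ``room is left'' by $\neg$CH. None of this affects the validity of the theorem as stated — it is correctly attributed — but as a self-contained proof the proposal establishes only the corollary reduction and the CH implication.
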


\begin{remark} \label{T:rem} ${}$
\begin{enumerate}
\item Ge and Hadwin \cite{GH} had shown the sufficiency of CH in part (1) (although they specialized to $\T$ factors).  Actually the general result is quite easy: all the ultrapowers are $\aleph_1$-saturated and have density character $\cc$; if $\cc = \aleph_1$ the conclusion follows from the fact that two $\kappa$-saturated elementarily equivalent models of density character $\kappa$ are isomorphic \cite[Proposition 4.13]{FHS2}.
\item The article \cite{FHS1} also proves a version of (2) for relative commutants $\M' \cap \M^\U$, as follows.  The relative commutant of a separable infinite-dimensional $C^*$-algebra is unique up to isomorphism if and only if CH is assumed, answering a question of Kirchberg to the first named author and generalizing the main result in \cite{F}.  For a separable $\T$ factor $\M$, the isomorphism type of the relative commutant depends on the ultrafilter if and only if CH is denied and $\M$ is McDuff.  This completes the answer (started in \cite{GH}) to a question from McDuff's original paper \cite[Question (i), p.460]{M}.
\item In \cite{FHS2} it is shown that a theory $T$ has the order property if and only if it is not \textit{stable} (see \cite[Section 5]{FHS2} for the definition). 
      It seems that the relation between stability and nonisomorphic ultrapowers is not well known even in classical model theory.
\item Shelah and the first-named author \cite{FS} have sharpened Theorem \ref{T:ch} by showing that the number of nonisomorphic ultrapowers is either 1 or $2^\cc$.
\end{enumerate}
\end{remark}

\textbf{Q.} If two separable $\T$ factors are elementarily equivalent, do they have isomorphic ultrapowers based on free ultrafilters of $\NN$?

\textbf{A.} If CH is assumed, the answer is yes, and any free ultrafilters will do.  This is based on the same fact cited in Remark \ref{T:rem}(1).  Without CH we do not know the answer, but for structures other than $\T$ factors it is known that one may need ultrafilters on uncountable index sets \cite{Sh2}.

\textbf{Q.} Can two nonisomorphic separable $\T$ factors fail to have isomorphic ultrapowers?

\textbf{A.} The first explicit answer we can find is in \cite{FGL}, where it was noted that property $\Gamma$ passes to ultrapowers and ultraroots, and also that a non-McDuff factor cannot have an ultrapower isomorphic to an ultrapower of $\R$.  These are instances of \L o\'{s}'s theorem and were covered in Sections \ref{SS:gamma} and \ref{SS:mcd}.

\textbf{Q.} Can two nonisomorphic separable $\T$ factors have isomorphic ultrapowers?

\textbf{A.} This does not seem to have been explicitly displayed in the literature, but must be known to the experts.  We first need to recall the definition of the \textit{fundamental group} $\F(\M)$ for a $\T$ factor $\M$.  With $\tau$ the unique tracial state of $\M$ and $\tr$ the usual trace on $\B(\ell^2)$, $\tr \otimes \tau$ is a tracial weight on the $\text{II}_\infty$ factor $\B(\ell^2) \bar{\otimes} \M$.  For $\lambda \in \R_+$, we set $\M^\lambda = p(\B(\ell^2) \bar{\otimes} \M)p$, where $p$ is any projection in $\B(\ell^2) \bar{\otimes} \M$ such that $(\tr \otimes \tau)(p) = \lambda$; $\M^\lambda$ is well-defined up to isomorphism.  Finally $\F(\M)$ is the subgroup of $\RR_+$ defined by $\{\lambda \mid \: \M \simeq \M^\lambda\}$.

The important fact here is that $\F(\M)$ need not be closed in $\RR_+$ (in which case it is necessarily dense).  This was first established in 1983 by Golodets-Nessonov \cite{GN1}; see \cite{GN2} for the English version or \cite{P1995} for a different proof.  Given such $\M$, let $1 > \lambda \notin \F(\M)$, so that $\M \not \simeq \M^\lambda$.  We claim that $\M^\U \simeq (\M^\lambda)^\U$.  For let $\{\lambda_n\} \subset\F(\M) \cap (0,1)$ be an increasing sequence converging to $\lambda$, and let $\theta_n$ be an isomorphism from $\M$ to $p_n(\M^\lambda)p_n \simeq \M^{\lambda_n}$, where $p_n$ is a projection in $\M^\lambda$ having trace $\lambda_n/\lambda$.  It is easy to see that the map $\Pi \theta_n: \ell^\infty(\M) \to \ell^\infty(\M^\lambda)$ descends to an isomorphism from $\M^\U$ to $(\M^\lambda)^\U$.  (It is surjective because its range is the ultraproduct $\Pi_\U (p_n \M^\lambda p_n)$, which is strongly dense in $(\M^\lambda)^\U$ and thus equal to it.)  Thus the factors $\{\M^\lambda \mid \lambda \in \RR_+\}$ all have isomorphic ultrapowers, although they are not all isomorphic to each other.

The next answer supersedes this one.

\textbf{Q.} Are there any separable $\T$ factors $\N$ such that for separable $\M$, $\M^\U \simeq \N^\U$ implies $\M \simeq \N$?  In \cite[end of Section 4]{FH}, it was asked whether $\R$ has this property.

\textbf{A.} The following new theorem gives a strong negative answer.

\begin{theorem} \label{T:main}
Let $\N$ be any $\text{II}_1$ factor.  There are $\cc$ separable nonisomorphic $\text{II}_1$ factors that are elementarily equivalent to $\N$.  If CH is assumed and $\N$ is separable, then for any of these factors $\M$ we have $\M^\U \simeq \N^\U$.
\end{theorem}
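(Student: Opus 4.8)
The clause about $\M^\U \simeq \N^\U$ under CH I would dispatch at once: assuming CH we have $\cc = \aleph_1$, and by Proposition~\ref{T:sat}(1) every ultrapower of a separable model is $\aleph_1$-saturated of density character $\cc = \aleph_1$; since elementarily equivalent $\aleph_1$-saturated models of density character $\aleph_1$ are isomorphic (Remark~\ref{T:rem}(1)), any separable $\M \equiv \N$ has $\M^\U \simeq \N^\U$. So the whole content is the first sentence. I would also reduce immediately to the case $\N$ separable, by the downward L\"owenheim--Skolem theorem (Theorem~\ref{T:dls}): an elementary submodel of a factor is a factor, since if an element of the submodel is central there, then by elementarity it is central in the ambient factor, hence scalar.

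With $\N$ separable, the plan is to fix a $\cc^+$-saturated ultrapower $\N^\V$ (Proposition~\ref{T:sat}(2)) and to carve out, for each $r \in 2^\NN$, a separable elementary submodel $\M_r \prec \N^\V$, chosen so that distinct branches give non-isomorphic $\M_r$. Each $\M_r$ is then a separable $\T$ factor (an elementary submodel of the factor $\N^\V$) with $\M_r \equiv \N^\V \equiv \N$, so producing $\cc$ pairwise non-isomorphic such $\M_r$ finishes the proof. To encode $r$, I would use that $\Th(\N)$, being the theory of an infinite-dimensional tracial von Neumann algebra, is very far from $\aleph_0$-categorical: fix a perfect family $\{p_w : w \in 2^{<\NN}\}$ of pairwise incompatible, non-principal $\varnothing$-types of $\Th(\N)$ --- for instance types recording the joint $*$-distribution of finite tuples that generate prescribed finitely generated subfactors, or types extracted from the order-property formula of Theorem~\ref{T:ch}(2) --- and then build $\M_r$ by an omitting-types argument carried out inside $\N^\V$, so that $\M_r$ realizes the types along branch $r$ but omits the (countably many, non-principal) off-branch types, i.e., the $p_w$ for which $w$ is not an initial segment of $r$. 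Since isomorphic structures realize precisely the same $\varnothing$-types, $r \neq s$ then forces $\M_r \not\simeq \M_s$.

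The hard part will be the choice of the tree of types. Its members must be non-principal and consistent with $\Th(\N)$, and crucially their off-branch members must be genuinely omittable by a factor equivalent to $\N$; this disqualifies the most obvious candidates --- the moment/distribution types --- because every $\T$ factor contains a copy of $\R$ and so realizes every type that $\R$ does. One is thereby pushed to ``structural'' types asserting, or forbidding, the presence of specified subfactors, and must verify both that enough of these remain non-principal over $\Th(\N)$ to form a binary splitting tree and that realizing a branch stays compatible with omitting the off-branch types. I expect this last point --- the simultaneous bookkeeping of realizations and omissions --- rather than any single ingredient, to be the real obstacle. (When $\N$ has non-closed, hence dense, and countable fundamental group there is a shortcut: the amplifications $\{\N^\lambda : \lambda \in \RR_+\}$ are separable, fall into $\cc$ isomorphism classes, and have pairwise isomorphic ultrapowers by the argument in the paragraph preceding the theorem, so they are pairwise elementarily equivalent to $\N$. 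But this is unavailable for McDuff $\N$, in particular for $\N = \R$, since every McDuff factor has full fundamental group.)
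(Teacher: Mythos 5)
Your treatment of the CH clause is exactly the paper's (Remark~\ref{T:rem}(1)), and the reduction to separable $\N$ via downward L\"owenheim--Skolem is harmless. But the core of the theorem --- producing $\cc$ pairwise nonisomorphic separable models of $\Th(\N)$ --- is not actually proved. Your plan rests on exhibiting a perfect tree of pairwise incompatible non-principal types over $\Th(\N)$ whose off-branch members can be simultaneously omitted while the on-branch members are realized, and you yourself concede that you cannot name such a tree: the moment types are ruled out (every $\T$ factor contains $\R$), and the ``structural'' types are left entirely unspecified. This is not a bookkeeping issue that one can wave at; in classical model theory a complete non-$\aleph_0$-categorical theory need not have continuum many countable models at all (Ehrenfeucht's examples have three), and whether ``many types'' forces ``many countable models'' is Vaught-conjecture territory. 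Nothing in the toolkit of this paper (Theorems~\ref{T:up}, \ref{T:dls}, Proposition~\ref{T:sat}) delivers an omitting-types theorem for metric structures, let alone one strong enough to run a perfect-tree construction inside $\Th(\N)$ for arbitrary $\N$. So the proposal identifies the right target but stops exactly where the work begins.

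The paper's proof sidesteps types altogether with one operator-algebraic input: by \cite{NPS} there is an uncountable family $\{\M_\alpha\}$ of separable $\T$ factors, each embeddable in $\R^\U$, such that at most countably many of the $\M_\alpha$ embed into any single $\T$ factor. Since $\R \hookrightarrow \N$, each $\M_\alpha \hookrightarrow \R^\U \hookrightarrow \N^\U$; downward L\"owenheim--Skolem then yields a separable elementary submodel $\N_\alpha \prec \N^\U$ containing $\M_\alpha$, so $\N_\alpha \equiv \N$. An isomorphism $\N_\alpha \simeq \N_\beta$ would make $\M_\beta$ embed into $\N_\alpha$, so each isomorphism class among the $\N_\alpha$ contains at most countably many indices, forcing $\cc$ classes. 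The invariant separating the models is thus ``which $\M_\alpha$ embed,'' a property of the universal theory realized concretely, rather than a realized/omitted type pattern. Your parenthetical shortcut via non-closed fundamental groups is correct for those special $\N$ (it is the argument in the Q\&A preceding the theorem, and it only gives elementary equivalence of the countably many-to-continuum many amplifications, not $\cc$ classes for arbitrary $\N$), but as you note it cannot cover $\N = \R$, which is the case the theorem is really after.
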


\begin{proof}
The article \cite{NPS} displays an uncountable family of separable $\text{II}_1$ factors $\{\M_\alpha\}$ such that each $\M_\alpha$ embeds in $\R^\U$, and at most countably many of the $\M_\alpha$ embed in any given $\text{II}_1$ factor.  Given $\N$, we always have $\R \hookrightarrow \N$, and we can take the ultrapower of this inclusion to conclude that for each $\alpha$, $\M_\alpha \hookrightarrow \R^\U \hookrightarrow \N^\U$.  Use the downward L\"{o}wenheim-Skolem theorem (Theorem \ref{T:dls}) to find a separable elementary submodel $\N_\alpha$ of $\N^\U$ containing $\M_\alpha$.  Then note that each of the $\N_\alpha$ can be isomorphic to at most countably many other $\N_\beta$, as such an isomorphism implies that $\M_\beta$ embeds in $\N_\alpha$.  So there are uncountably many nonisomorphic $\N_\alpha$, each elementarily equivalent to $\N$.  The second sentence of the theorem follows from the argument in Remark \ref{T:rem}(1).
\end{proof}

\section{Pseudofiniteness} \label{S:fmp}

In classical model theory, an infinite model $\M$ is \textit{pseudofinite} if it is elementarily equivalent to an ultraproduct of finite models.  In continuous model theory one should replace finiteness with compactness (see \cite{GL}); in particular, when all models are balls in vector spaces, we will say that an infinite-dimensional model $\M$ is \textit{pseudofinite} if it is elementarily equivalent to an ultraproduct of finite-dimensional models.  By definition pseudofiniteness is axiomatizable.

Thus a model $\M$ is pseudofinite if there are finite-dimensional $\M_\alpha$ with $f_{\M_\alpha} \to f_\M$ in $\C$.  One might think of $\M$ as a limit ``in logic" of finite-dimensional structures.

Pseudofiniteness of (real) Banach spaces was studied in several papers of Henson and Moore about thirty years ago.  They turned up several interesting phenomena, but really only scratched the surface; it is still unknown whether $L^p$ ($p \neq 2$) is pseudofinite.  See \cite{Mo} for some overview of the main results.  Even for $C_0(K)$ spaces subtle things happen: $c_0$ is not pseudofinite, while for compact $K$ pseudofiniteness of $C(K)$ is equivalent to $K$ being totally disconnected and having a dense set of isolated points.

Turning now to finite factors, we see by Theorem \ref{T:main} that there are continuum many nonisomorphic separable pseudofinite $\T$ factors.  These are ``logic" limits of matrices, so an operator algebraist would guess that $\R$ is pseudofinite, but as we show below, it is not.  In 2007 the third named author realized that this follows directly from a neglected 1942 paper of von Neumann \cite{vN}.  At that time some experts already knew the result, as a consequence of recent techniques.  In a case of synchronicity, it was first written up in 2008 by Fang and Hadwin \cite{FH}, whose proof follows the same general lines as ours, more or less substituting results of Szarek \cite{Sz} for those of von Neumann. 

\begin{theorem} \label{T:ps} $(\sim$\cite[Section 4]{FH}$)$
Pseudofinite factors do not have $\Gamma$ (and thus $\R$ is not pseudofinite).
\end{theorem}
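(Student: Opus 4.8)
The plan is to exploit the fact (Section~\ref{SS:gamma}) that property $\Gamma$ for a $\T$ factor is axiomatized by the sentences $\{\sigma_n\}$, so that $\M$ has $\Gamma$ exactly when $\sigma_n^\M = 0$ for every $n$. Thus it suffices to produce a single integer $n$ and a constant $\eps > 0$ with $\sigma_n^{\MM_m} \geq \eps$ for every $m$. Granting this, suppose $\M$ is pseudofinite and write $\M \equiv \Pi_\U \M_k$ with each $\M_k$ finite-dimensional; the reduction below replaces the $\M_k$ by matrix algebras $\MM_{m_k}$ with $m_k \to \infty$ without changing the elementary equivalence class, and then \eqref{E:limit} gives $\sigma_n^\M = \lim_{k \to \U} \sigma_n^{\MM_{m_k}} \geq \eps > 0$, so $\M$ fails $\Gamma$. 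Since $\R$ has $\Gamma$, it is not pseudofinite.

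For the reduction I would argue as follows. $\T$ factors form an axiomatizable class (\cite{FHS2}), so if $\Pi_\U \M_k$ is a $\T$ factor then $\U$-almost every $\M_k$ approximately satisfies the $\T$-factor axioms. Concretely, writing $\M_k = \bigoplus_i \MM_{d_{k,i}}$ with tracial weights $\lambda_{k,i}$, triviality of the center of $\Pi_\U \M_k$ forces $\max_i \lambda_{k,i} \to 1$ along $\U$ (otherwise a central projection supported off the dominant summand would give a nonscalar central sequence), and diffuseness forces the dimension $m_k$ of that dominant summand to tend to $\infty$; since the complementary summands carry trace tending to $0$ they vanish in the $L^2$-ultraproduct, so $\Pi_\U \M_k \simeq \Pi_\U \MM_{m_k}$. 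This step is routine.

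The substantive input is the uniform estimate $\sigma_n^{\MM_m} \geq \eps$, and this is precisely where von Neumann's 1942 paper \cite{vN} is used. What one needs is: a fixed $n$ and $\eps > 0$ such that for every $m$ there are contractions $a_1, \dots, a_n \in \MM_m$ for which every unitary $u \in \MM_m$ with $\tau(u) = 0$ satisfies $\max_j \|[a_j, u]\|_2 \geq \eps$ --- i.e. a spectral gap on $(\CC I)^\perp$ for a suitable average of the maps $x \mapsto a_j x a_j^\ast$, with the gap not decaying in $m$. Once such matrices are in hand, $\bar x = (a_1, \dots, a_n)$ witnesses $\sigma_n^{\MM_m} \geq \eps'$: any $y$ in the unit ball that is not $\eps/3$-close in $\|\cdot\|_2$ to a trace-zero unitary already makes $\|y^\ast y - I\|_2 + |\tau(y)|$ large, and for $y$ near such a $u$ the commutator terms are near those of $u$. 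The difficulty --- and the reason this is not a triviality --- is that the obvious candidates degenerate: for the Weyl pair (cyclic shift $p$ and diagonal $q = \text{diag}(1, \omega, \dots, \omega^{m-1})$, $\omega = e^{2\pi i/m}$) one computes on the orthonormal basis $\{p^j q^k\}$ that $\|[p,y]\|_2^2 + \|[q,y]\|_2^2 = \sum_{j,k} |c_{jk}|^2 (|1 - \omega^j|^2 + |1 - \omega^k|^2)$, whose gap off the scalars is only $4\sin^2(\pi/m) \to 0$. Von Neumann's contribution is the construction, via volume/measure estimates on unitary groups, of matrix tuples with no such degeneration; this --- not the model-theoretic packaging, which is immediate from $\{\sigma_n\}$ axiomatizing $\Gamma$ together with \eqref{E:limit} --- is the crux, and the Fang--Hadwin proof \cite{FH} substitutes Szarek's estimates \cite{Sz} at exactly this point.
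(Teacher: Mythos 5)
Your proposal is correct and follows essentially the same route as the paper: von Neumann's 1942 matrices supply a uniform positive lower bound on $\sigma_1^{\Mn}$ (a single matrix per dimension suffices, so $n=1$), and the limit formula \eqref{E:limit} transfers this bound to any factor elementarily equivalent to a matricial ultraproduct. The only cosmetic difference is that the paper applies von Neumann's statement directly to the almost-normal contraction $y$ realizing the infimum (his hypothesis is $\|[y,y^*]\|_2<\eps$, not unitarity), which avoids your intermediate perturbation of $y$ to an exact trace-zero unitary; your extra reduction from general finite-dimensional algebras to matrix algebras is fine but unnecessary, since the finite-dimensional models in the class of finite factors are already the $\Mn$.
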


\begin{proof}
The main result of von Neumann's paper \cite{vN} is Theorem 23.1, which says that for every $\delta > 0$ there is $\eps > 0$ and a sequence of matrices $x_n \in \mathbb{M}_n$ such that
\begin{equation} \label{E:vN}
y \in \mathbb{M}_n, \quad \|[y,y^*]\|_2, \|[x_n,y]\|_2 < \eps \qquad \Rightarrow \qquad \|y - \tau(y)I\|_2 < \delta.
\end{equation}

For specificity take $\eps$ and $\{x_n\}$ corresponding to $\delta = \frac14$ in von Neumann's theorem; we may assume $\eps < \frac34$.  (Actually a simple calculation in $2 \times 2$ matrices implies that $\eps$ cannot be $\geq 3/4$.)  Then
\begin{equation} \label{E:vN2}
\sigma_1^{\Mn} = \sup_{  x \in {   {(\Mn)}_{\leq 1}  }   } \inf_{y \in {{(\Mn)}_{\leq 1}}} \|y^* y - I\|_2 + |\tau(y)| + \|[x, y]\|_2 \geq \inf_{y \in {{(\Mn)}_{\leq 1}}} \|y^* y - I\|_2 + |\tau(y)| + \|[x_n, y]\|_2.
\end{equation}
If $\sigma_1^{\Mn} < \frac{\eps}{2}$, then there is a specific contraction $y \in (\Mn)_{\leq 1}$ such that each of the three summands on the right hand side of \eqref{E:vN2} is $< \frac{\eps}{2}$.  We show that this leads to a contradiction, as follows.

Since $y^*y - I$ and $yy^* - I$ are unitarily conjugate, they have the same $L^2$-norm.  Then
$$\|y^*y - I\|_2 < \eps/2 \qquad \Rightarrow \qquad  \|[y,y^*]\|_2 = \|(yy^* - I) + (I - y^*y)\|_2 \leq \|yy^* - I\|_2 + \|I - y^*y\|_2 < (\eps/2) + (\eps/2) = \eps.$$
Now both conditions in \eqref{E:vN} are met, so $\|y - \tau(y)I\|_2 < \frac14$.  Using the elementary inequality $\|ab\|_2 \leq \|a\|_\infty \|b\|_2$ ($\|\cdot\|_\infty$ denotes the operator norm), compute
$$\eps/2 > \|y^*y - I\|_2 \geq \|I\|_2 - \|y^*y\|_2 \geq 1 - \|y^*\|_\infty \|y\|_2 \geq 1 - 1 \cdot \|\tau(y)I + (y - \tau(y)I)\|_2 \geq 1 - [(\eps/2) +(1/4)].$$
But this contradicts $\eps < 3/4$.

Therefore $\sigma_1^{\Mn} \geq \frac{\eps}{2}$ for all $n$.  If $\M$ is a pseudofinite factor, then it is elementarily equivalent to an ultraproduct $\Pi_\U \Mn$, and $\sigma_1^\M = \lim_{n \to \U} \sigma_1^{\Mn} \geq \frac{\eps}{2}$.
\end{proof}

Separable pseudofinite factors are in some sense the simplest, being the closest to matrix algebras.  Yet $\R$ is not pseudofinite, and in fact no specific pseudofinite factors have been identified!

\begin{question}
Are all pseudofinite factors elementarily equivalent?  This is just another way of asking whether $f_{\Mn}$ converges as $n \to \infty$.  It may have first appeared in print in \cite{Li}, but it is certainly older than that and was also posed to the first named author by Popa.  Assuming CH, it is the same as asking whether the ultraproducts $\Pi_\U \Mn$ are all isomorphic as $\U$ runs over $\beta \NN \setminus \NN$.  (This follows from Theorem \ref{T:ch}.  If CH fails, they are not all isomorphic.)
\end{question}

\section{Finite representability, local universality, and the Connes Embedding Problem} \label{S:fr}

One of the most actively studied issues in $\T$ factors today is the \textit{Connes Embedding Problem (CEP)}, which in its original form \cite{C} asks whether every separable $\T$ factor embeds in $\R^\U$, for $\U \in \beta \NN \setminus \NN$.  See \cite{O2} for a survey of its equivalents in operator algebras, and \cite{Pe} for an exposition of its relation to hyperlinear and sofic groups.  In this section we interpret CEP and related issues in model theoretic terms.  The poset framework we discuss is old news in Banach space theory \cite{Sch}, and we do not claim that these formulations represent any true progress toward solving CEP.  Nonetheless they unify concepts from different classes of models, and they may suggest new problems.

\subsection{Another function representation} Return to the general setup of a class axiomatized in continuous model theory.  Let $S' \subset S$ be the subset of $\sup$-sentences, i.e., those of the form $\sup_{\bar{x}} \varphi(\bar{x})$ for some quantifier-free formula $\varphi$.  We write $f'_\M$ for the restriction $f_\M |_{S'}$.  Once again $f'_\M$ is determined by its zero set, which one might term the \textit{universal theory} of $\M$.  As an extension of a theorem in classical model theory, we have
\begin{equation} \label{E:embed}
f'_\M \leq f'_\N \quad \iff \quad \M \text{ embeds in an ultrapower of }\N.
\end{equation}
(See \cite[Proposition 13.1]{HI}.)  The condition on the left means, roughly, that any finite configuration in $\M$ can also be found (approximately) in $\N$.  One says that $\M$ is \textit{finitely representable} in $\N$.  Unlike the situation for elementary equivalence, CH is not needed to conclude that if $\M$ and $\N$ are separable, then $\M$ embeds in some ultrapower of $\N$ based on a free ultrafilter of $\NN$ if and only if it embeds in all ultrapowers of $\N$ based on free ultrafilters of $\NN$ \cite[Corollary 4.14]{FHS2}.

\begin{notation}
We let $\C'$ be the set of functions on $S'$ of the form $f'_\M$, subscripted if we want to identify the class of models, e.g. $\C'_{\T}$.  An element of $\C'$ is an equivalence class of models under the relation of mutual finite representability, which is generally much weaker than elementary equivalence.  Not only is $\C'$ compact with the topology of pointwise convergence as functions on $S'$, but the partial order encodes finite representability of models.
\end{notation}

Here is a useful observation.

\begin{lemma} \label{T:upward}
$\C'$ contains a greatest (resp. least) element if and only if it is upward-directed (resp. downward-directed).
\end{lemma}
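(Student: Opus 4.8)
The plan is to prove only the nontrivial direction: if $\C'$ is upward-directed then it has a greatest element (the downward-directed case being symmetric, replacing $\sup$-sentences with $\inf$-sentences, or simply dualizing the argument). The idea is to take a pointwise limit of an increasing net. First I would note that $\C'$ is a subset of a product of bounded intervals (the range of each $\sup$-sentence is bounded), hence is contained in a compact space; by the final Notation block we already know $\C'$ itself is compact in the pointwise topology, so it is in particular pointwise-closed in that product. So any pointwise limit of elements of $\C'$ that stays in the ambient product automatically lies in $\C'$.

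Next, for each finite subset $E \subseteq S'$, upward-directedness lets me pick (by induction on $|E|$, using directedness repeatedly) a single $f'_{\M_E} \in \C'$ that dominates $f'_\N$ on $E$ for every... wait, more carefully: I want, for each $E$, an element $g_E \in \C'$ with $g_E \geq f'_\M$ on $E$ for all the relevant $\M$. The cleaner route: index by \emph{pairs} $(\M, E)$, or better, observe that the finite subsets of $\C'$ themselves, ordered by inclusion, form a directed set, and for each finite $F = \{f'_{\M_1}, \dots, f'_{\M_k}\} \subseteq \C'$ repeated use of upward-directedness produces some $h_F \in \C'$ with $h_F \geq f'_{\M_i}$ for all $i \leq k$. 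This net $(h_F)_F$ is ``eventually above'' every fixed element of $\C'$. By compactness of $\C'$, the net $(h_F)$ has a subnet converging pointwise to some $h \in \C'$. I would then check that $h$ is the greatest element: given any $f'_\M \in \C'$, for all $F \ni f'_\M$ we have $h_F(\sigma) \geq f'_\M(\sigma)$ for every $\sigma \in S'$; passing to the limit along the convergent subnet (which is cofinal, so eventually contains $F$'s that include $f'_\M$) preserves the inequality pointwise, giving $h \geq f'_\M$. Hence $h$ is a greatest element.

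The main obstacle is purely bookkeeping: making sure the net is set up so that \emph{every} element of $\C'$ is eventually dominated, and that ``pointwise limit along a subnet'' genuinely preserves the $\leq$ relation coordinatewise — this is immediate since each coordinate functional $g \mapsto g(\sigma)$ is continuous and $\{t : t \geq f'_\M(\sigma)\}$ is closed in the bounded interval. One subtlety worth a remark: the net $(h_F)$ need not itself converge, which is why a subnet is taken; but since the tail of any subnet of a net cofinal in the directed set of finite subsets still eventually contains each singleton $\{f'_\M\}$, the domination passes to the limit. The converse direction is trivial: a greatest (resp. least) element is an upper (resp. lower) bound for every pair, so $\C'$ is a fortiori upward- (resp. downward-) directed. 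This completes the plan.
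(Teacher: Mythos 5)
Your proof is correct and follows essentially the same route as the paper: the paper's (very terse) argument also turns $\C'$ into a net using upward-directedness and extracts a limit point by compactness, which is then the greatest element. Your indexing by finite subsets rather than by the elements themselves is only a cosmetic variation, and your verification that coordinatewise order passes to the limit is exactly the detail the paper leaves implicit.
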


\begin{proof}
Assuming $\C'$ is upward-directed, one can make it into a net, indexing elements by themselves, and apply compactness.  The other claims are similar or obvious.
\end{proof}

\subsection{Local universality} One says that an object is \textit{universal} for a class if all objects in the class embed into it.  We will work with a weaker notion.

\begin{definition}
Working in some axiomatizable class, we say that a model $\M$ is \textit{locally universal} if every model is finitely representable in it; equivalently, $f'_\M$ dominates all other functions in $\C'$.
\end{definition}

Thus Lemma \ref{T:upward} gives a criterion for the existence of locally universal objects.  CEP asks whether $\R$ is locally universal.

\begin{example} \label{T:ex} ${}$
\begin{enumerate}
\item The poset $\C'_\text{inf-dim Banach}$ is upward-directed -- for example, one can produce an upper bound for a pair via any kind of direct sum or tensor product.  It is classical that there are universal objects for separable Banach spaces, for instance $C[0,1]$ and $C(\{0,1\}^\NN)$, and any of these will \textit{a fortiori} be locally universal.  But there are also locally universal objects that are not universal (for separable spaces), such as $c_0$.

    By Dvoretzky's theorem $f'_{\ell^2}$ is the least element of $\C'_\text{inf-dim Banach}$.
\item Similarly $\C'_{\T}$ is upward-directed -- constructions of an upper bound include the tensor product and (tracial) free product.  Thus \textit{there is a locally universal $\T$ factor}, and it may be taken separable by Theorem \ref{T:dls}.  This contrasts with Ozawa's theorem that there is no universal object for separable $\T$ factors \cite{O1}.

    Since $\R \hookrightarrow \M$ for every $\M$, $f'_\R$ is the least element in $\C'_{\T}$.  CEP asks whether it is also the greatest, i.e., whether $\C'_{\T}$ is a singleton (equivalently, whether all $\T$ factors are locally universal).

    Ozawa proved his theorem by appealing to an uncountable family of group von Neumann factors associated to quotients of a certain property $(T)$ group.  A different argument was given in \cite{NPS}, this time using an uncountable family of crossed product factors.  (The latter factors were additionally shown to be embeddable in $\R^\U$, which is why \cite{NPS} is used in our proof of Theorem \ref{T:main}.)  We mention here that Ozawa's theorem also follows directly from an abstract result in model theory that does not require exhibition of a specific uncountable family.  This will be presented in a sequel paper.
\end{enumerate}
\end{example}

Example \ref{T:ex}(2) may be viewed as a poor man's resolution of CEP, since a locally universal $\T$ factor $\s$ has the property that every $\T$ factor embeds in an ultrapower of $\s$.  Equivalently, any separable $\T$ factor embeds in $\s^\U$, for any $\U \in \beta \NN \setminus \NN$.

Note that locally universal $\T$ factors are not all elementarily equivalent.  Given any separable locally universal $\s$, both $\s \otimes \R$ and $\s * \R$ contain $\s$ so are also locally universal; the first is McDuff and the second does not even have $\Gamma$ \cite[Theorem 11]{Ba}.  Still, if one could somehow identify a specific locally universal $\T$ factor, CEP would become the concrete problem of deciding whether it is finitely representable in $\R$.  The construction implied by Example \ref{T:ex}(2) and Lemma \ref{T:upward} is as follows: form an ultraproduct of representatives of $\C'_{\T}$, where the ultrafilter is a limit point of the net $\C'_{\T}$ itself, and use downward L\"{o}wenheim-Skolem if a separable object is desired.  One can replace the ultraproduct with a tensor product or free product, and then actually countably many factors suffice, so the output is separable with no need for downward L\"{o}wenheim-Skolem.  Just take a countable dense set of $\sup$-sentences $\varphi$ in $S'$, and for each $n$ and $\varphi$ find a $\T$ factor where $\varphi$ is within $\frac1n$ of the supremum of $\varphi$ over all $\T$ factors.

\smallskip

It can also be interesting to include the finite-dimensional objects and consider the structure of $\C'$.  One can ask whether the set $\{f_\M | \: \M \text{ finite-dimensional}\}$ is cofinal in $\C'$, i.e., whether any model embeds in an ultraproduct of finite-dimensional models.  For a Banach space $\X$ one can use the finite-dimensional subspaces of $\X$, but the analogous argument does not work for finite factors because the finite-dimensional subfactors of a $\T$ factor are not upward-directed.

\begin{remark} \label{T:micro}
Here we use the framework of $\C'_{\T}$ to reobtain a few known facts.
\begin{enumerate}
\item CEP is equivalent to asking whether any $\T$ factor embeds into an ultraproduct of matrix algebras, because any infinite-dimensional matricial ultraproduct $\Pi_\U \Mn$ is mutually finitely representable with $\R$.  To see this, use the fact that $\Mn$ and $\R$ both embed in any $\T$ factor, and compute in $\C'_{\text{finite factors}}$:
\begin{equation} \label{E:moment}
f'_\R \leq f'_{\Pi_\U \Mn} = \lim f'_{\Mn} \leq f'_\R \qquad \Rightarrow \qquad f'_\R = f'_{\Pi_\U \Mn},
\end{equation}
cf. \cite[Theorem 3.3]{Li}.
\item It was asked at the end of Section 4 of \cite{FH} whether $\R^\U$ embeds into a matricial ultraproduct.  This is known, and of course it follows from the preceding item, but one can also give a direct argument that avoids model theory.  Write $\R$ as the closure of an increasing union of $\MM_{2^n}$, and let $E_{\MM_{2^n}}: \R \to \MM_{2^n}$ be the trace-preserving conditional expectation.  Then embed $\R$ in a matricial ultraproduct via $\R \ni x \mapsto (E_{\MM_{2^n}}(x)) \in \Pi_\U \MM_{2^n}$, and take an ultrapower of this embedding.
\item Combining \eqref{E:embed} and \eqref{E:moment} with the minimality of $f'_\R$ in $\C'_{\T}$ gives, for any $\T$ factor $\M$,
\begin{equation} \label{E:micro}
\M \hookrightarrow \R^\U \qquad \iff \qquad f'_\M = f'_{\Pi_\U \Mn}.
\end{equation}
Now quantifier-free formulas for $\T$ factors are functions of \textit{moments} (traces of polynomials in variables), so the value of a $\sup$-sentence on $\M$ is a restriction on the set of moments that a tuple from $\M$ can have.  From \eqref{E:micro}, $\M \hookrightarrow \R^\U$ says that any finite set of moments of a tuple from $\M$ can be arbitrarily well-approximated by the moments of a tuple from some matrix algebra.  So CEP is equivalent to asking whether ``moment-wise approximation by matrices" is possible for any tuple from any $\T$ factor, a statement known as the \textit{Microstates Conjecture} \cite[7.4]{V}.
\end{enumerate}
\end{remark}

\begin{remark}
A property is called a \textit{superproperty} if it passes from a model $\M$ to any model finitely representable in $\M$.  Superproperties are in 1-1 correspondence with the \textit{hereditary} subsets of $\C'$, and as such they form a complete lattice.  (For any superproperty $P$, the set $\{f'_\M | \: \M \text{ has $P$}\}$ is a hereditary subset of $\C'$.  On the other hand, for any hereditary $H \subseteq \C'$, the class $\{\M \mid \: f'_\M \in H\}$ defines a superproperty.)  Superproperties are always local but not necessarily axiomatizable.  For example, uniform non-squareness corresponds to the open hereditary subset $\{f'_\X \mid \: f'_\X(-\psi) < 0\}$ of $\C'_{\text{Banach}}$, with $\psi$ as in Section 2.3 (note $-\psi$ is a $\sup$-sentence taking nonpositive values).  CEP asks whether there are no nontrivial superproperties for $\T$ factors; if there are any, finite representability in $\R$ is the strongest one, and the negation of local universality is the weakest one (see \cite[Lecture 16]{Sch} for the Banach space version).  If CEP turns out to be negative, one could move on to study the lattice of superproperties, and see which are axiomatizable.  Note that finite representability in $\R$ is axiomatizable; can one find an explicit axiomatization?  As in Remark \ref{T:micro}(3), this amounts to describing the set of matricial moments (see, e.g., \cite{R1999}).  Recently Netzer and Thom \cite{NT} used old polynomial identities of Amitsur-Levitzki \cite{AL} to prove that a weaker kind of finite representability is universal.
\end{remark}

\appendix

\section{Proof of Proposition \ref{T:gamma1}}

In addition to its use in the present paper, Proposition \ref{T:gamma1} proves, and improves, some facts mentioned in \cite[Example 4.2]{S2009}.  We will employ the following two \textit{ad hoc} quantities $a_\M$, $b_\M$ for a tracial von Neumann algebra $(\M, \tau)$.  Let $a_\M$ be zero if there are no minimal projections, and otherwise the largest trace of a minimal projection.  Let $b_\M = \inf \{|\tau(u)| : \: u \in \U(\M)\}$.

\begin{lemma} \label{T:gamma}
Let $(\M, \tau)$ be a tracial von Neumann algebra.
\begin{enumerate}
\item If $p\in \M$ is a minimal projection with $\tau(p)>$ 1/2, then $p$ is central.
\item The infimum in the definition of $b_\M$ is achieved, and $b_\M  = (2a_\M-1) \vee 0$.
\end{enumerate}
\end{lemma}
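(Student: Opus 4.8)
The plan is to prove (1) first and feed it into (2); throughout, ``$p$ minimal'' is used in the form $p\M p = \CC p$.

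\emph{Part (1).} I would argue by contradiction: suppose $p$ is minimal with $\tau(p) > 1/2$ but $p$ is not central. Then $p$ fails to commute with some $x \in \M$, so $(1-p)xp \neq 0$ or $(1-p)x^*p \neq 0$; in either case (taking an adjoint if needed) we get $w \in \M$ of the form $w = (1-p)yp \neq 0$. Since $w^*w = p y^*(1-p) y p$ lies in $p\M p = \CC p$, we have $w^*w = \alpha p$ with $\alpha > 0$; hence in the polar decomposition $w = v(w^*w)^{1/2}$ the partial isometry $v \in \M$ satisfies $v^*v = p$, while $q := vv^*$, the range projection of $w$, is a nonzero projection under $1-p$. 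Then $p$ and $q$ are orthogonal and Murray--von Neumann equivalent, so $2\tau(p) = \tau(p) + \tau(q) = \tau(p+q) \le \tau(1) = 1$, a contradiction. The same computation shows that at most one minimal projection can have trace $>1/2$, and that when one exists it attains the supremum defining $a_\M$; I use both facts below.

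\emph{Part (2), case $a_\M > 1/2$.} By (1) there is a unique central minimal projection $p$, and $\tau(p) = a_\M$. For any unitary $u$, centrality of $p$ makes $pu$ a unitary of $p\M p = \CC p$, so $pu = \lambda p$ with $|\lambda| = 1$; meanwhile $(1-p)u = (1-p)u(1-p)$ is a contraction living under $1-p$, so $|\tau((1-p)u)| \le \tau(1-p) = 1 - a_\M$. Thus $|\tau(u)| \ge a_\M - (1-a_\M) = 2a_\M - 1$, giving $b_\M \ge 2a_\M - 1$, and the symmetry $u_0 = 1 - 2p$ gives $|\tau(u_0)| = |1 - 2a_\M| = 2a_\M - 1$. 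Hence $b_\M = 2a_\M - 1$, attained at $u_0$.

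\emph{Part (2), case $a_\M \le 1/2$.} Now $(2a_\M - 1)\vee 0 = 0$, so it suffices to exhibit a trace-zero unitary. Using Zorn's lemma, pick a maximal family $\{q_i\}$ of nonzero, pairwise orthogonal projections with $\tau(q_i) \le 1/2$ for all $i$. If $e := \sum_i q_i \neq 1$, then the nonzero projection $1-e$ either fails to be minimal, in which case it splits into two nonzero projections one of which has trace $\le 1/2$, or is minimal, in which case $\tau(1-e) \le a_\M \le 1/2$; in both cases a projection orthogonal to every $q_i$ and of trace $\le 1/2$ could be adjoined, contradicting maximality. So $\sum_i q_i = 1$, and faithfulness of $\tau$ together with $\sum_i \tau(q_i) = 1$ forces the family to be countable. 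Writing $s_i = \tau(q_i) \in (0, 1/2]$ with $\sum_i s_i = 1$, the polygon (triangle) inequality yields unimodular scalars $\lambda_i$ with $\sum_i \lambda_i s_i = 0$; then the strongly convergent sum $u := \sum_i \lambda_i q_i$ is a unitary in $\M$ with $\tau(u) = \sum_i \lambda_i s_i = 0$. Hence $b_\M = 0$, attained.

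\emph{Main obstacle.} Everything except the last case is essentially bookkeeping. The delicate point is that when $\{q_i\}$ is countably infinite one must justify that $u = \sum_i \lambda_i q_i$ is a genuine unitary of $\M$ (strong-operator convergence of the partial sums, which lie in $\M$) and that $\tau(u) = \sum_i \lambda_i s_i$ (normality of $\tau$), and one must invoke the countably-infinite form of the polygon inequality. That form is routine — the partial sums $\sum_{i \ge 2}\lambda_i s_i$ fill the closed annulus with outer radius $1 - s_1$ and inner radius $\max(0,\, 2s_2 - (1-s_1))$, which contains the point $s_1$ exactly because $s_1, s_2 \le 1/2$ — but it is the step most likely to need an explicit word.
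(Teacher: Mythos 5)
Your proof is correct, and it differs from the paper's in two places worth noting. For part (1) the paper deduces from $z(p)\neq p$ that $p$ and $p^\perp$ admit equivalent nonzero subprojections (citing a comparison-theory lemma of Takesaki) and then uses minimality plus the trace to rule this out; you instead build the equivalence by hand, taking $w=(1-p)yp\neq 0$, observing $w^*w\in p\M p=\CC p$, and reading off from the polar decomposition a projection $q\leq 1-p$ with $q\sim p$, so $2\tau(p)\leq 1$. The two arguments are the same in spirit, but yours is self-contained. The genuine divergence is in the case $a_\M\leq 1/2$ of part (2): the paper asserts, without justification, that $I$ is a sum of three projections each of trace $\leq 1/2$ and closes a (possibly degenerate) triangle, whereas you take a maximal orthogonal family of nonzero projections of trace $\leq 1/2$, show via minimality considerations that it is a countable partition of unity, and close a countable polygon with the strongly convergent unitary $\sum_i\lambda_i q_i$. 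Your route is longer but actually supplies the existence argument the paper elides, at the price of the countably infinite polygon inequality and the convergence/normality issues, which you handle (or at least explicitly flag) correctly. The case $a_\M>1/2$ coincides with the paper's argument, with the useful added remark that centrality of $p$ (from part (1)) is what forces $pup=\lambda p$ with $|\lambda|=1$.
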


\begin{proof}
(1): Let $z(p)$ be the central support of $p$ (the smallest central projection dominating $p$), and suppose toward a contradiction that $z(p) \neq p$.  Then $z(p) p^\perp \neq 0$, so that $p$ and $p^\perp$ are not centrally orthogonal.  By \cite[Lemma 1.7]{T}, $p$ and $p^\perp$ have nonzero equivalent subprojections.  But minimality means that the only nonzero subprojection under $p$ is $p$ itself, which cannot be equivalent to a subprojection of $p^\perp$ because $\tau(p) > \tau(p^\perp)$.  We conclude $z(p) = p$, so that $p$ is central.


(2): If $a_\M \leq 1/2$, then there are three projections $\{p_j\}_{j=1}^3$ adding to $I$ each with trace $\leq 1/2$.  There is a (possibly degenerate) triangle in the complex plane with these traces as leg lengths, so there are complex unit scalars $\{\alpha_j\}_{j=1}^3$ with $\sum \alpha_j \tau(p_j) = 0$.  This makes $\sum \alpha_j p_j$ a trace zero unitary.

Now suppose there is a minimal projection $p$ with $\tau(p) > 1/2$.  Given any unitary $u \in \M$, $pup = \lambda p$ for some unit scalar $\lambda \in \CC$.  This implies
$$|\tau(u)| = |\tau(u(p + p^\perp))| = |\tau(pup) + \tau(u p^\perp)| \geq |\tau(\lambda p)| - |\tau(u p^\perp)| \geq \tau(p) - \tau(p^\perp) = 2 \tau(p) - 1.$$
The bound is achieved because the unitary $p - p^\perp$ has trace exactly $2 \tau(p) - 1$.
\end{proof}



\begin{proof}[Proof of Proposition \ref{T:gamma1}]
To establish the equivalence of the eight conditions, we only need to show that the weakest implies the strongest.  So assume that the relative commutant in $\M^\U$ of any finite set in $\M$ is nontrivial, and seeking a contradiction, let $\{(x^n_j)_j\}_{n=1}^\infty \subset \M^\U$ be such that $W^*(\{(x^n_j)_j\})' \cap \M^\U$ does not contain a trace zero unitary.  The goal of the next two paragraphs is to find a finite set in $\M$ that generates an irreducible subfactor of $\M$, so that a result from \cite{FGL} can be used to deduce a contradiction.  We find the set by first taking finitely many elements directly out of the representing sequences $\{(x^n_j)_j\}$, then showing that their relative commutant in $\M^\U$ contains a large central projection, and finally adding an appropriate unitary.

Now for any $j$, by hypothesis $\N_j \overset{\text{def}}{=} W^*(\{x^n_j\}_{n=1}^j)' \cap \M^\U$ is nontrivial.  Find a unitary in $\N_j$ whose trace is $< \frac1{j} + b_{\N_j}$.  By going far enough out in the representing sequence for this unitary, we can find $u_j \in \U(\M)$ that nearly commutes with $\{(x^n_j)_j\}$ and has trace $< \frac2{j} + b_{\N_j}$.  Then $(u_j)$ is in the relative commutant $W^*(\{(x^n_j)_j\})' \cap \M^\U$, with trace $\lim_{j \to \U} b_{\N_j}$, and by assumption this trace cannot be zero.  Thus, for $j$ in a neighborhood of $\U$, the value of $b_{\N_j}$ is bounded away from zero.  From here on we only discuss those $j$ in this neighborhood.  By Lemma \ref{T:gamma}(2), $\N_j$ contains a minimal projection of trace $> \alpha > 1/2$.

In this paragraph we are able to work inside $\M$.  The relative commutant $\M_j \overset{\text{def}}{=} W^*(\{x^n_j\}_{n=1}^j)' \cap \M$ is contained in $\N_j$, so it also contains a minimal projection $p_j$ of trace $> \alpha$.  By Lemma \ref{T:gamma}(1), $p_j$ is central in $\M_j$.  Let $u_j$ be a unitary in $\M$ such that $p_j^\perp \leq u_j p_j u_j^* $.  The relative commutant of the finite set $\{x^n_j\}_{n=1}^j  \cup \{u_j\}$ is $\M_j \cap u_j \M_j u_j^*$, and we claim that it is trivial.  For suppose $y \in \M_j \cap u_j \M_j u_j^*$.  From $y \in \M_j$ and the centrality and minimality of $p_j$, we know that $p_j y = \lambda p_j$ for some $\lambda \in \CC$.  Applying the same argument to the condition $y \in u_j \M_j u_j^*$, we deduce $(u_j p_j u_j^*) y = \mu u_j p_j u_j^*$ for some $\mu \in \CC$.  Now
\begin{align*}
&y = (p_j + p_j^\perp) y = \lambda p_j + p_j^\perp (u_jp_ju_j^*) y = \lambda p_j + p_j^\perp \mu u_jp_ju_j^* = \lambda p_j + \mu p_j^\perp;
\\&y = (u_jp_ju_j^* + u_jp_j^\perp u_j^*) y = \mu u_jp_ju_j^* + u_jp_j^\perp u_j^* (p_j) y = \mu u_j p_ju_j^* + u_jp_j^\perp u_j^* \lambda p_j = \mu u_jp_ju_j^* + \lambda u_jp_j^\perp u_j^*.
\end{align*}
Taking the trace of these two equalities gives
$$\lambda \tau(p_j) + \mu(1 - \tau(p_j)) = \tau(\lambda p_j + \mu p_j^\perp) = \tau(y) = \tau(\mu (u_jp_ju_j^*) + \lambda (u_jp_ju_j^*)^\perp) = \mu \tau(p_j) + \lambda(1 - \tau(p_j)),$$
and since $\tau(p_j) \neq 1/2$, this forces $\lambda = \mu$ and finally $y = \lambda I$.

Thus $\{x^n_j\}_{n=1}^j  \cup \{u_j\}$ generates an irreducible subfactor in $\M$.  By \cite[Lemma 3.5]{FGL}, its relative commutant in $\M^\U$ is either trivial or nonatomic.  (Although the introduction to \cite{FGL} restricts consideration to factors with separable predual, this is not required for Lemma 3.5 or its supporting material.)  By assumption this relative commutant is not trivial.  We finish the proof by pointing out that it also cannot be nonatomic.  In fact it is just $\N_j \cap u_j \N_j u_j^*$, which is contained in $\N_j$, which has a nontrivial minimal projection; it follows that the relative commutant also has a nontrivial minimal projection.
\end{proof}

\end{document}